\newcommand{\argmax}{\mathop{\mathrm{arg\,max}}}
\DeclareMathOperator{\E}{\mathbb{E}}
\DeclareMathOperator{\St}{\mathcal{S}}
\DeclareMathOperator{\A}{\mathcal{A}}
\DeclareMathOperator{\Prec}{\mathcal{P}}
\DeclareMathOperator{\w}{\mathbf{w}}
\DeclarePairedDelimiter\abs{\lvert}{\rvert}
\DeclarePairedDelimiter\norm{\lVert}{\rVert}
\DeclarePairedDelimiter\innorm{\langle}{\rangle}
 \newcommand{\ie}{i.\,e., }
 \newcommand{\eg}{e.\,g., }
\newenvironment{Proof}[1][Proof]
  {\proof[#1]\leftskip=1cm\rightskip=1cm}
  {\endproof}
\icmltitlerunning{Distributional Robustness and Regularization in Reinforcement Learning}
\begin{document}

\newtheorem{lemma}{Lemma}[section] 
\newtheorem*{lemma*}{Lemma} 
\newtheorem{proposition}{Proposition}[section] 
\newtheorem*{proposition*}{Proposition}
\newtheorem{corollary}{Corollary}[section]
\newtheorem{remark}{Remark}[section]
\newtheorem{assumption}{Assumption}[section]
\newtheorem{definition}{Definition}[section]
\newtheorem{example}{Example}[section]
\newtheorem{theorem}{Theorem}[section]
\newtheorem*{theorem*}{Theorem}

\twocolumn[
\icmltitle{Distributional Robustness and Regularization in Reinforcement Learning}  




\begin{icmlauthorlist}
\icmlauthor{Esther Derman}{tech}
\icmlauthor{Shie Mannor}{tech}
\end{icmlauthorlist}

\icmlaffiliation{tech}{Technion, Israel}

\icmlcorrespondingauthor{Esther Derman}{estherderman@campus.technion.ac.il}
\icmlcorrespondingauthor{Shie Mannor}{shie@ee.technion.ac.il}

\icmlkeywords{Machine Learning, ICML}

\vskip 0.3in]



\printAffiliationsAndNotice{}  

\begin{abstract}
Distributionally Robust Optimization (DRO) has enabled to prove the equivalence between robustness and regularization in classification and regression, thus providing an analytical reason why regularization generalizes well in statistical learning. 
Although DRO's extension to sequential decision-making overcomes \emph{external uncertainty} through the robust Markov Decision Process (MDP) setting, the resulting formulation is hard to solve, especially on large domains. 
On the other hand, existing regularization methods in reinforcement learning (RL) only address \emph{internal uncertainty} due to stochasticity. 
Our study aims to facilitate robust RL by establishing a dual relation between robust MDPs and regularization. We introduce Wasserstein distributionally robust MDPs and prove that they hold out-of-sample performance guarantees. We also define a new regularizer on empirical value functions and show that it lower bounds the Wasserstein distributionally robust value function. Then, we extend the result to linear value function approximation for large state spaces. Our approach provides an alternative formulation of robustness with guaranteed finite-sample performance. 
Moreover, it suggests using our regularizer as a practical tool for dealing with $\emph{external uncertainty}$ in RL. 
\end{abstract}

\section{Introduction}
\label{section:introduction}

Markov Decision Processes (MDPs) originated from the seminal works of \citet{bellman1957markovian} and \citet{howard1960dynamic} to model sequential decision-making problems and provide a theoretical basis for RL methods. Real-world applications which include healthcare and marketing, for example, give rise to several challenging issues. Firstly, the model parameters are generally unknown but rather estimated through historical data. This may lead the performance of a learned strategy to significantly degrade when deployed \cite{mannor2007bias}. Secondly, experimentation can be expensive or time-consuming which constrains policy evaluation and improvement to perform with limited data \cite{lange2012batch}. Lastly, when the state-space is large, the value function is commonly approximated by a parametric function, which results in additional uncertainty regarding the efficiency of a learned policy \cite{farahmand2009regularized, farahmand2011regularization}.

This phenomenon is reminiscent of over-fitting in statistical learning that can be interpreted as the following single-stage decision-making problem \cite{zhang2018study}. Consider a training set of random input-output vectors $(\widehat{x}_i, \widehat{y}_i)_{i=1}^{n}$ generated by a fixed distribution and assume one wants to find a parameter $\theta\in\Theta$ that minimizes the expected loss function $\ell_{\theta}$ with respect to (w.r.t.) the generating distribution. In general, the true distribution is unknown and hard to estimate accurately. A classical method to overcome this is to minimize the empirical risk: $\min_{\theta\in\Theta}\frac{1}{n}\sum_{i=1}^n \ell_\theta(\widehat{x}_i, \widehat{y}_i)$, but this often yields solutions that perform poorly on out-of-sample data \cite{friedman2001elements}. 

Several methods ensure better generalization to new, unseen data (test set) while performing well on available data (training set). These may be categorized into two main approaches. The first one \emph{regularizes} the empirical risk and optimizes the resulting objective \cite{vapnik2013nature}. 
Another approach \emph{robustifies} the objective function by introducing ambiguity w.r.t. the empirical distribution \cite{kuhn2019wasserstein}. The resulting problem can be formulated as 
  $\text{(DRO)}: \min_{\theta\in\Theta}\sup_{\mathbb{Q}\in \mathfrak{M}(\widehat{\mathbb{P}}_n)} \mathbb{E}_{(x,y)\sim\mathbb{Q}}[\ell_{\theta}(x,y)],$ 
where $\widehat{\mathbb{P}}_n$ is the empirical distribution w.r.t. the sample set and $\mathfrak{M}(\widehat{\mathbb{P}}_n)$ is an ambiguity set of probability distributions consistent with the dataset. Such ambiguity sets can be based on \emph{specified properties} such as moment constraints \cite{data_driven, partial_lifting, wiesemann2014distributionally}, or on a given \emph{divergence} from the empirical distribution \cite{hu2013kullback,phi_div,erdougan2006ambiguous,DRO_kuhn}. The resulting problem can be solved using Distributionally Robust Optimization (DRO). 

Wasserstein distance-based ambiguity sets are of particular interest in DRO theory, as they display interesting ramifications in main statistical learning problems. More precisely, the specific problem (DRO) is equivalent to regularization for fundamental learning tasks such as classification \cite{xu2009robustness, abadeh_regression, blanchet_equivalence}, regression \cite{abadeh_mass} and maximum likelihood estimation \cite{kuhn2019wasserstein}. However, 
equivalence between robustness and regularizaion has only been studied on single-stage decision problems. 

Regularization techniques are widely used in RL to mitigate uncertainty in value function approximation \cite{farahmand2009regularized} or to derive improved versions of policy optimization methods \cite{shani2019adaptive}. Although regularized policy learning helps to derive risk-sensitive strategies that satisfy safety criteria \cite{ruszczynski2010risk, tamar2015policy}, existing connections between regularization in RL and robustness are still weak: Prior regularization methods address the \emph{internal uncertainty} \ie the inherent stochasticity of the dynamical system, without accounting for the \emph{external uncertainty} of the MDP \ie transition and reward functions. 
Although robust MDPs provide a convenient framework for dealing with \emph{external uncertainty} and enabling
better generalization in sequential decision-making  \cite{iyengar2005robust, nilim2005robust,xu2010distributionally, yu2015distributionally}, 
solving them remains challenging even on small domains, mainly because it is hard to construct an uncertainty set that yields a robust policy without being too conservative \cite{petrik2019beyond}. 

Our study aims to facilitate robust RL by addressing a new regularization perspective on sequential decision-making settings. In Sec. \ref{section:background}, we recall the MDP framework and describe its robust and distributionally robust formulations. In Sec. \ref{section:wdrmdp}, we introduce Wasserstein distributionally robust MDPs as an analytical tool to establish a connection between robustness and regularization.
We address our main result in Sec. \ref{section:reg_WDRMDP}: In Thm. \ref{theorem:tabular_equivalence}, we devise the first dual relation between robustness to model uncertainty and regularized value functions. An extension to linear function approximation is addressed and formally stated in Thm.~\ref{theorem:approximate_equivalence}. Finally, we establish out-of-sample guarantees for Wasserstein distributionally robust MDPs, thus demonstrating the fact that our regularization method enables better generalization to unseen data. All proofs can be found in the Appendix.

\textbf{Related Work. }
Regularization in statistical learning precedes its robust formulations. Indeed, a robust optimization interpretation has first been suggested by \citet{xu2009robustness} for support vector machines, long after the regularization methods of \citet{vapnik2013nature}. Then, advancing research on data-driven DRO has enabled to establish equivalence between robustness and regularization in a wider range of statistical learning problems \cite{abadeh_regression, abadeh_mass,blanchet_equivalence, kuhn2019wasserstein}. Differently, in RL, RMDPs date back to \citeyear{iyengar2005robust} with the concurrent works \cite{iyengar2005robust,nilim2005robust} and their extension to DRMDPs \cite{xu2010distributionally, yu2015distributionally, yang2017convex,yang2018wasserstein, chen2019distributionally}, while to our knowledge, our study suggests the first connection between regularization and robustness to parameter uncertainty in RL.  

Moreover, distributional RL as in \cite{bellemare2017distributional} differs from our approach. There, an optimal policy is learned through the \emph{internal distribution} of the cumulative reward while we study its \emph{worst-case expectation} to account for the \emph{external uncertainty} of the MDP parameters. Also, \citet{bellemare2017distributional} consider a different metric, namely a minimum of Wasserstein distances over the state-action space. Such a metric is problematic in our setting, as it can falter the rectangularity assumption (see Sec.
~\ref{section:rmdp}-\ref{section:drmdp}). 

\textbf{Notation. }
$\mathcal{M}(\mathcal{E})$ denotes the set of distributions over a Borel set $\mathcal{E}$. 
For all $n\in\mathbb{N}$, we define $[n]:=\{1,\cdots,n\}$.

\section{From MDPs to distributionally robust MDPs}
\label{section:background}
This section provides the theoretical background used throughout this study:  
It describes the MDP setting and its generalization to robust and distributionally robust MDPs.  

\subsection{Markov Decision Process}
A Markov Decision Process (MDP) is a tuple $\langle \St, \A, r,  p\rangle$ with finite state and action spaces $\St$ and $\A$ respectively,
such that $r: \St\times \A \rightarrow \mathbb{R}$ is a deterministic reward function bounded by $R_{\max}$ and  $p: \St  \rightarrow \mathcal{M}(\St)^{\abs{\A}}$ denotes the transition model \ie for all $s\in\St$, the elements of $p_s := (p(\cdot |s,a_1),\cdots, p(\cdot |s,a_{\abs{A}}))\in\mathcal{M}(\St)^{\abs{\A}} \subset \mathbb{R}^{\abs{\St}\times\abs{\A}}$ are listed in such a way that transition probabilities of the same action are arranged in the same block. At step $t$, the agent is in state $s_t$, chooses action $a_t$ according to a policy $\pi: \St \rightarrow \mathcal{\A}$ and gets a reward $r(s_t,a_t)$. It is then brought to state $s_{t+1}$ with probability $p(s_{t+1}| s_t, a_t)$. 

The agent's goal is to maximize the following \emph{value function} over the set of policies $\Pi$ for all $s\in\St$:
   $ v^\pi_p(s) = \mathbb{E}_p^\pi\left[\sum_{t=0}^\infty \gamma^t r(s_t,a_t)\biggm{|}  s_0=s\right]$, 
where $\gamma\in [0,1)$ is a discount factor 
and the expectation is conditioned on transition model $p$, policy $\pi$ and initial state $s$. It can be efficiently computed thanks to the \emph{Bellman operator} contraction, which admits  
$v^\pi_p$ as a unique fixed point \cite{puterman2014markov}.

\subsection{Robust Markov Decision Process}
\label{section:rmdp}

A robust MDP $\langle \St, \A, r,  \Prec\rangle$ is an MDP with uncertain transition model $p\in\Prec$. We assume that the \emph{uncertainty set} $\Prec$ 
is $(s,a)$-rectangular, \ie $\Prec =\bigotimes_{s\in\St} \Prec_s = \bigotimes_{s\in\St, a\in\A}\Prec_{s,a},$ where for all $s\in\St$, $\Prec_{s}$ is a set of transition matrices $p_s \in\Prec_s$ \cite{wiesemann2013robust}. Accordingly, for all $s,s'\in\St$ and $a\in\A$, the probability of getting from state $s$ to state $s'$ after applying action $a$ is given by any $p_s\in\Prec_s$. Moreover, we assume that $\Prec$ is closed, convex and compact. 

The \emph{robust value function} under any policy $\pi$ is the worst-case performance:
$v_{\Prec}^\pi(s) = \inf_{p\in\Prec}v_p^\pi(s), \quad \forall s\in\St$, and a policy is robust optimal whenever it solves the max-min problem $\max_{\pi\in\Pi}\min_{p\in\Prec}v_p^\pi$.
Thanks to the rectangularity assumption, one can show that $v_{\Prec}^\pi$ is the unique fixed point of a contracting \emph{robust Bellman operator} \cite{iyengar2005robust, nilim2005robust}, and a robust MDP can be solved efficiently using robust dynamic programming.

\subsection{Distributionally Robust Markov Decision Process}
\label{section:drmdp}
In a Distributionally Robust Markov Decision Process (DRMDP) $\langle \St, \A, r,  \Prec, \mathcal{M}\rangle$, the transition model is also unknown but instead, it is a random variable supported on $\Prec$ and obeying a distribution $\mu\in\mathcal{M}\subseteq \mathcal{M}(\Prec)$ \cite{xu2010distributionally,yu2015distributionally}. Here, the class of probability distributions $\mathcal{M}$ is the \emph{ambiguity set} and each of them is supported on the uncertainty set $\Prec$. 
We assume that $\Prec$ and $\mathcal{M}$ are rectangular, so that any $\mu\in\mathcal{M}$ is a product of independent measures $\mu_{s}$ over $\Prec_{s}$. 

Given any policy $\pi$, the \emph{distributionally robust value function} is the worst-case expectation over the ambiguity set, \ie $\forall s\in\St$, 
$v_{\mathcal{M}}^\pi(s) =\inf_{\mu\in\mathcal{M}} \mathbb{E}_{p\sim \mu}[v^\pi_p(s)],$
and a distributionally robust optimal policy $\pi_{\mathcal{M}}^*$ satisfies
$\pi_{\mathcal{M}}^*\in \argmax_{\pi\in\Pi} v_{\mathcal{M}}^\pi$. Although the DRMDP setting can be very general, the ambiguity set must satisfy specific properties for the solution to be tractable \cite{xu2010distributionally, yu2015distributionally, chen2019distributionally}. In the following, we shall introduce DRMDPs with Wasserstein distance-based ambiguity sets that yield a solvable reformulation. This will enable us to connect robust MDPs with regularization in Sec.~\ref{section:reg_WDRMDP}, and to establish out-of-sample guarantees in Sec.~\ref{section:out_of_sample}.

\subsection{Wasserstein Distributionally Robust Markov Decision Process}
\label{section:wdrmdp}
The Wasserstein metric arises from the theory of optimal transport and measures the optimal transport cost between two probability measures \cite{villani2008optimal}. It can be viewed as the minimal cost required for turning a pile of sand into another, with the cost function being the amount of sand times its distance to destination. More formally, for any $s\in\St$, let $\norm{\cdot}$ be a norm on the uncertainty set $\Prec_s \subseteq \mathcal{M}(\St)^{\abs{\A}}$. We define the Wasserstein distance over distributions supported on $\Prec_s$ as follows. 

\begin{definition}
The \textbf{$1$-Wasserstein distance} between two probability measures $\mu_s,\nu_s\in\mathcal{M}(\Prec_s)$ is: 
$$
d(\mu_s, \nu_s) := \min_{\gamma\in\Gamma(\mu_s, \nu_s)}\left\{ \int_{\Prec_s\times \Prec_s}\norm{p_s - p_s'}\gamma(dp_s,dp_s')\right\},
$$
where $\Gamma(\mu_s, \nu_s)$ is the set of distributions over $\Prec_s\times \Prec_s$ with marginals $\mu_s$ and $\nu_s$.
\end{definition}


Given $\hat{\mu}_s\in\mathcal{M}(\mathcal{P}_s)$, we can define the Wasserstein ball of radius $\alpha_s$ centered at $\hat{\mu}_s$ as:
$$
\mathfrak{M}_{\alpha_s}(\hat{\mu}_s):= \{\nu_s\in\mathcal{M}(\mathcal{P}_s): d(\hat{\mu}_s, \nu_s)\leq \alpha_s\},
$$
which leads us to introduce Wasserstein DRMDPs. In that setting, the construction of a contracting \emph{Wasserstein distributionally robust Bellman operator} enables to use standard planning algorithms for finding an optimal policy \cite{chen2019distributionally}.  

\begin{definition}
A \textbf{Wasserstein DRMDP (WDRMDP)} is a tuple $\langle \St, \A, r, \Prec,\mathfrak{M}_{\alpha}(\hat{\mu}) \rangle$ with $\mathfrak{M}_{\alpha}(\hat{\mu}) = \bigotimes_{s\in\St} \mathfrak{M}_{\alpha_s}(\hat{\mu}_s)$ such that $p_s\in \Prec_s$ is unknown for all $s\in\St$. Instead, it is a random variable of distribution $\mu_s\in\mathfrak{M}_{\alpha_s}(\hat{\mu}_s)$. 
\end{definition}

\section{Regularization and Wasserstein DRMDPs} \label{section:reg_WDRMDP} 
This section addresses the core contributions of our study. We first define the \emph{empirical value function} as an MDP counterpart of the empirical risk defined in Sec. \ref{section:introduction}. Then, we introduce a regularization method whose connection with robustness is detailed in Sec.~\ref{subsection:robust_reg}.

\subsection{Empirical Value Function and Distribution}
\label{subsection:empirical_estimate}
In our setting, we take the empirical distribution as the center of the Wasserstein ball ambiguity set. We estimate it based on historical observations, as described below. 

\textbf{Tabular State-Space. }Given $n$ episodes of respective lengths $(T_i)_{i\in [n]}$ we estimate the transition model through visit counts as: $\widehat{p}_i(s'|s,a) := \frac{n_i(s,a,s')}{\sum_{s''\in\St}n_i(s,a,s'')}$,
where $s,s'\in\St, a\in\A, i\in[n]$ and $n_i(s,a,s')$ is the number of transitions $(s,a,s')$ occurred during episode $i$. 

\textbf{Large State-Space. }If the state space is too large to be stored in a table, we use kernel averages to approximate the empirical transition function \cite{lim2019kernel}. For all action $a\in\A$, define a kernel $\psi_a:\St\times\St \rightarrow \mathbb{R}_+$. Then, we define the empirical transition function for all $s,s'\in\St$ as:
$$
\widehat{p}_i(s'| s,a) := \frac{\psi_a(s,s')n_i(s,a,s')}{\sum_{s''\in\St}\psi_a(s,s'')n_i(s,a,s'')}, \quad\forall i\in[n].
$$

In both tabular and large state-spaces, a model estimate $\widehat{p}_i$ can be deduced from each episode, which yields an \emph{empirical value function} $v_{\hat{p}_i}^\pi$ for any policy $\pi\in\Pi$. Then, the empirical distribution over transition functions is defined as $\widehat{\mu}_n := \bigotimes_{(s,a)\in\St\times\A} \widehat{\mu}^n_{s,a}$ where for all $(s,a)\in\St\times\A, $
$
\widehat{\mu}^n_{s,a} := \frac{1}{n}\sum_{i=1}^n \delta_{\hat{p}_i(\cdot |s,a)}
$
and $\delta_{\hat{p}_i(\cdot |s,a)}$ is a Dirac distribution with full mass on $\hat{p}_i(\cdot |s,a)$.
Setting $\delta_i := \bigotimes_{(s,a)\in\St\times\A}\delta_{\hat{p}_i(\cdot| s,a)}$, the empirical distribution $\hat{\mu}_n$ can further be written as 
$\hat{\mu}_n = \frac{1}{n}\sum_{i=1}^n\delta_i$, which defines the center of our Wasserstein ball ambiguity set and enables to construct the WDRMDP $\langle \St, \A, r, \mathfrak{M}_\alpha(\hat{\mu}_n)\rangle$.

\subsection{Robustification via Regularization} \label{subsection:robust_reg}
We now hold the tools for establishing our main result, which we prove for tabular and large state-spaces.

\begin{theorem}[\textbf{Tabular Case}]
\label{theorem:tabular_equivalence}
Let the WDRMDP $\langle \St, \A, r, \mathfrak{M}_\alpha(\hat{\mu}_n)\rangle$ as defined above and $L_{\gamma,\beta, R_{\max}} := \frac{\beta\gamma R_{\max}}{(1-\gamma)^2}$ where $\beta\geq 0$. Then, for all $\pi\in\Pi, s\in\St,$ we have:
$v_{\mathfrak{M}_\alpha(\hat{\mu}_n)}^\pi(s) \geq \frac{1}{n}\sum_{i=1}^n\limits v^\pi_{\hat{p}_i}(s) - \kappa_s^{\pi} \alpha,$
where $\kappa_s^{\pi}\geq 0$ depends on $\pi$ and $s$ while satisfying $\kappa_s^{\pi} \leq L_{\gamma,\beta, R_{\max}}$.
\end{theorem}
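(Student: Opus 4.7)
The plan is to combine three ingredients: the definition of the Wasserstein distributionally robust value function, the Lipschitz continuity of the map $p \mapsto v_p^\pi(s)$, and Kantorovich--Rubinstein duality for the $1$-Wasserstein distance. By definition,
$$v_{\mathfrak{M}_\alpha(\hat{\mu}_n)}^\pi(s) = \inf_{\mu \in \mathfrak{M}_\alpha(\hat{\mu}_n)} \mathbb{E}_{p \sim \mu}[v_p^\pi(s)],$$
and the empirical mean satisfies $\mathbb{E}_{p \sim \hat{\mu}_n}[v_p^\pi(s)] = \frac{1}{n}\sum_{i=1}^n v_{\hat{p}_i}^\pi(s)$. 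So it suffices to show that for every $\mu$ in the Wasserstein ball, $\mathbb{E}_{p \sim \mu}[v_p^\pi(s)] \geq \mathbb{E}_{p \sim \hat{\mu}_n}[v_p^\pi(s)] - \kappa_s^\pi \alpha$, which is precisely what Kantorovich--Rubinstein yields once $p \mapsto v_p^\pi(s)$ is shown to be $\kappa_s^\pi$-Lipschitz on $\Prec$.

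Next I would establish the Lipschitz estimate by a standard perturbation argument on the Bellman equation. Writing $v_p^\pi = r^\pi + \gamma P_p^\pi v_p^\pi$ for each $p$ and subtracting gives
$$v_p^\pi - v_{p'}^\pi = \gamma (I - \gamma P_{p'}^\pi)^{-1}(P_p^\pi - P_{p'}^\pi)\, v_p^\pi.$$
Reading off the $s$-th component, the row vector $e_s^\top (I - \gamma P_{p'}^\pi)^{-1}$ is (up to the factor $1/(1-\gamma)$) the discounted occupancy measure of $\pi$ starting at $s$, which provides the state- and policy-dependent constant $\kappa_s^\pi$. Combining with the crude bounds $\|v_p^\pi\|_\infty \leq R_{\max}/(1-\gamma)$, $\|(I - \gamma P_{p'}^\pi)^{-1}\|_\infty \leq 1/(1-\gamma)$, and the constant $\beta$ comparing the matrix norm of $P_p^\pi - P_{p'}^\pi$ to $\|p - p'\|$ on $\Prec$, one recovers the uniform bound $\kappa_s^\pi \leq L_{\gamma,\beta,R_{\max}} = \frac{\beta\gamma R_{\max}}{(1-\gamma)^2}$.

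Finally I would invoke Kantorovich--Rubinstein duality: for any $\kappa_s^\pi$-Lipschitz function $f$,
$$\mathbb{E}_{p \sim \hat{\mu}_n}[f(p)] - \mathbb{E}_{p \sim \mu}[f(p)] \leq \kappa_s^\pi\, d(\hat{\mu}_n, \mu) \leq \kappa_s^\pi \alpha$$
whenever $\mu \in \mathfrak{M}_\alpha(\hat{\mu}_n)$. Applying this to $f(p) = v_p^\pi(s)$, rearranging, and taking the infimum over $\mu$ yields the claim. The $(s,a)$-rectangular factorization of $\hat{\mu}_n$ and $\mathfrak{M}_\alpha(\hat{\mu}_n)$ is used here to decouple the Wasserstein constraint per state, so that aggregating the per-state Lipschitz contributions matches the norm on $\Prec$ used to extract $\beta$.

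The main obstacle will be keeping $\kappa_s^\pi$ genuinely state- and policy-dependent rather than collapsing it to the uniform constant: one has to transport the occupancy row $e_s^\top(I-\gamma P_{p'}^\pi)^{-1}$ through the Lipschitz inequality and show it aggregates correctly against the rectangular Wasserstein radii, so that the sharp version of the bound is what drives the duality step while $L_{\gamma,\beta,R_{\max}}$ remains only as the worst-case envelope.
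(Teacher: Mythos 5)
Your proposal is correct, but it takes a genuinely different and considerably more elementary route than the paper. The paper's Claim~(i) runs the full Wasserstein-DRO reformulation machinery: it characterizes membership in $\mathfrak{M}_\alpha(\hat\mu_n)$ via a decomposition $\mu_s = \frac{1}{n}\sum_i\mu_s^i$, Lagrangianizes the transport-budget constraint, applies the max--min inequality, passes to a continuation function $\tilde u_s^\pi$ and its convex closure, invokes Fenchel conjugacy, the minimax theorem and support functions, and finally defines $\kappa_s^\pi$ as the dual-norm radius of the effective domain of $(\tilde u_s^\pi)^*$; only in Claim~(ii) does it use the Lipschitz estimate $\abs{v_p^\pi(s)-v_{p'}^\pi(s)}\leq \frac{\gamma R_{\max}}{(1-\gamma)^2}\norm{p_s-p'_s}_{\infty,1}$ (via Strehl--Littman) to show that this effective domain sits inside the ball of radius $L_{\gamma,\beta,R_{\max}}$. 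You instead prove the same Lipschitz bound directly from the resolvent identity $v_p^\pi - v_{p'}^\pi = \gamma(I-\gamma P_{p'}^\pi)^{-1}(P_p^\pi-P_{p'}^\pi)v_p^\pi$ and then apply Kantorovich--Rubinstein (or, even more simply, the primal coupling bound $\abs{\int f\,d\mu - \int f\,d\hat\mu_n}\leq \mathrm{Lip}(f)\,d(\mu,\hat\mu_n)$, which requires no duality at all). Since the theorem only asserts a one-sided inequality with \emph{some} $\kappa_s^\pi\in[0,L_{\gamma,\beta,R_{\max}}]$, your choice of $\kappa_s^\pi$ as the Lipschitz modulus of $p\mapsto v_p^\pi(s)$ is perfectly admissible, and the two constants are closely related (the conjugate's effective domain of a convex closure of an $L$-Lipschitz function lies in the dual ball of radius $L$). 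What your route buys is brevity and transparency; what the paper's route buys is an exact dual reformulation in the style of the DRO literature, which identifies $\kappa_s^\pi$ intrinsically and would survive without a global Lipschitz assumption. One caveat applies equally to both arguments: the ambiguity set is a product of per-state Wasserstein balls while $v_p^\pi(s)$ depends on the whole model $p$, so the per-state transport budgets $\alpha_s$ must be aggregated into a single radius $\alpha$ compatible with the product norm on $\Prec$ before the Lipschitz/KR step can be applied; you flag this correctly, and the paper handles it with the same level of informality (``for a radius $\alpha$ determined by radii $\alpha_s$-s''), so I do not count it as a gap specific to your proof.
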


From the above theorem we obtain that\\
~$
\frac{1}{n}\sum_{i=1}^n\limits v^\pi_{\hat{p}_i}(s) \geq v_{\mathfrak{M}_\alpha(\hat{\mu}_n)}^\pi(s) \geq \frac{1}{n}\sum_{i=1}^n\limits v^\pi_{\hat{p}_i}(s) - L_{\gamma,\beta, R_{\max}} \alpha.$\\
In particular, the regularized value function $\frac{1}{n}\sum_{i=1}^n v^\pi_{\hat{p}_i}(s) - L_{\gamma,\beta, R_{\max}} \alpha$ is guaranteed to be distributionally robust w.r.t. the Wasserstein ball $\mathfrak{M}_\alpha(\hat{\mu}_n)$ centered at the empirical distribution. This is of particular use for RL methods, as it enables to ensure better generalization without resorting the additional computations that DRMDPs require. As a matter of fact, standard value iteration can be performed to learn $v^\pi_{\hat{p}_i}(s)$ for all $i\in[n]$ and subtracting $L_{\gamma,\beta, R_{\max}}\alpha$ to the resulting average ensures distributional robustness w.r.t. the ambiguity set $\mathfrak{M}_\alpha(\hat{\mu}_n)$.

When the state-space is large, one generally approximates the value function using feature vectors. Specifically, define as $\Phi(\cdot)\in\mathbb{R}^m$ a feature vector function such that for all $p\in \Prec$ we have 
$
v^\pi_p(s) \approx  \Phi(s)^\top \w_p
$
and assume all feature vectors are linearly independent.
Under standard conditions, Thm. \ref{theorem:tabular_equivalence} generalizes to large scale MDPs using this linear value function approximation. 

\begin{theorem}[\textbf{Linear Approximation Case}]
\label{theorem:approximate_equivalence}
Let the WDRMDP $\langle \St, \A, r, \mathfrak{M}_\alpha(\hat{\mu}_n)\rangle$. Then, for all $\pi\in\Pi, s\in\St:$
$$\inf_{\mu\in\mathfrak{M}_\alpha(\widehat{\mu}_n)}\limits\mathbb{E}_{p\sim\mu}[\Phi(s)^\top\w_p] \geq \frac{1}{n}\sum_{i=1}^n \Phi(s)^\top \w_{\widehat{p}_i} - \eta_s^{\pi}\alpha,$$
where $\eta_s^{\pi}\geq 0$ depends on $s$ and $\pi$.
\end{theorem}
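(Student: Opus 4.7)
The plan is to follow the same scheme as Thm.~\ref{theorem:tabular_equivalence}, replacing the bounded functional $p\mapsto v^\pi_p(s)$ by $p\mapsto \Phi(s)^\top\w_p$ and re-deriving the Lipschitz estimate accordingly. Once Lipschitzness of this map in $p$ is established, Kantorovich--Rubinstein duality combined with the $(s,a)$-rectangularity of $\mathfrak{M}_\alpha(\hat{\mu}_n)$ transfers the Wasserstein radius $\alpha$ into the desired regularization gap.

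Concretely, for any $\mu \in \mathfrak{M}_\alpha(\hat{\mu}_n)$, Kantorovich--Rubinstein yields
$\mathbb{E}_{p\sim\mu}[\Phi(s)^\top\w_p] - \mathbb{E}_{p\sim\hat{\mu}_n}[\Phi(s)^\top\w_p] \geq -\eta_s^\pi \cdot d(\mu,\hat{\mu}_n) \geq -\eta_s^\pi\alpha,$
where $\eta_s^\pi$ is any Lipschitz constant of $p\mapsto \Phi(s)^\top\w_p$ with respect to the norm used to define the Wasserstein distance. Because $\hat{\mu}_n = \frac{1}{n}\sum_i \delta_i$, the expectation under $\hat{\mu}_n$ evaluates to $\frac{1}{n}\sum_i \Phi(s)^\top\w_{\widehat{p}_i}$, and taking the infimum over $\mu \in \mathfrak{M}_\alpha(\hat{\mu}_n)$ produces exactly the stated inequality. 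By rectangularity, the bound can in fact be tightened by distributing $\alpha$ across the $(s,a)$-marginals, just as in the tabular case.

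The substance of the theorem is the derivation of an explicit $\eta_s^\pi$. Under the standard LSTD-type projected Bellman equation, $\w_p$ solves $A_p \w_p = b$ with $A_p = \Phi^\top D(\Phi - \gamma P^\pi_p \Phi)$ and $b = \Phi^\top D r^\pi$, where $D$ is a positive weighting matrix. Linear independence of features makes $\Phi^\top D\Phi$ positive definite, and, under the standard contraction condition for the projected operator, $A_p^{-1}$ exists with operator norm bounded uniformly over $p\in\Prec$. Subtracting the equations at $p$ and $p'$ gives $\w_p - \w_{p'} = \gamma A_p^{-1}\Phi^\top D(P^\pi_p - P^\pi_{p'})\Phi \w_{p'},$ so $\|\w_p - \w_{p'}\| \leq C_\pi \|p - p'\|$ for an explicit constant $C_\pi$ depending on $\gamma$, $\|A_p^{-1}\|$, $\|\Phi\|$, $\|D\|$, and $\sup_{p'}\|\w_{p'}\|$. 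One then sets $\eta_s^\pi := \|\Phi(s)\|\, C_\pi$, which depends on $s$ through $\Phi(s)$ and on $\pi$ through $P^\pi_{\cdot}$ and $\w_{\cdot}$.

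The main obstacle is controlling $\|A_p^{-1}\|$ uniformly over the support of every $\mu \in \mathfrak{M}_\alpha(\hat{\mu}_n)$: the projected Bellman operator must remain contractive across all transition kernels consistent with the ambiguity set, otherwise $\eta_s^\pi$ blows up. This is precisely the role of the ``standard conditions'' alluded to in the statement, and is analogous to the usual assumption on the stationary distribution in LSTD analyses. A secondary technical subtlety is that $\w_p$ depends on the full transition model rather than on a single $p(\cdot|s,a)$-block, so the rectangular decomposition of the Wasserstein ball must be combined with the global Lipschitz estimate rather than applied coordinate-wise as in the tabular proof.
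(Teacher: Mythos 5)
Your route is genuinely different from the paper's. The paper reruns the DRO machinery of the tabular proof: it characterizes membership in the per-state ball through the decomposition $\mu_s=\frac{1}{n}\sum_{i=1}^n\mu_s^i$ with budget $\frac{1}{n}\sum_i\mathbb{E}_{\mu_s^i}\norm{p_s-\hat p_s^i}\le\alpha_s$, dualizes with a multiplier $\lambda$, passes to the convex closure of $w_s^\pi\colon p\mapsto\Phi(s)^\top\w_p$ (properness and closedness coming from linear independence of the features), and uses Fenchel biconjugation plus the minimax theorem, ending with $\eta_s^\pi:=\sup\{\norm{z}_*\colon z\in\mathfrak{W}_s^\pi\}$, the size of the effective domain of the conjugate -- which the paper never actually bounds in the linear case. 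You replace all of this by Kantorovich--Rubinstein duality plus an explicit Lipschitz constant for $p\mapsto\Phi(s)^\top\w_p$ obtained by perturbing the LSTD system $A_p\w_p=b$. This is shorter and, unlike the paper, exhibits a concrete finite $\eta_s^\pi$, at the price of committing to a specific definition of $\w_p$ (fixed weighting matrix $D$, uniformly bounded $\norm{A_p^{-1}}$ over $\Prec$), which is a defensible reading of the paper's ``standard conditions.''

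The step you should not wave through is $d(\mu,\hat\mu_n)\le\alpha$. The ambiguity set is a product of per-state balls, $\mathfrak{M}_\alpha(\hat\mu_n)=\bigotimes_{s}\mathfrak{M}_{\alpha_s}(\hat\mu^n_s)$, whereas the center $\hat\mu_n=\frac{1}{n}\sum_i\delta_i$ is a mixture of product Diracs; a member $\mu=\bigotimes_s\mu_s$ is a product of per-state mixtures. These structures do not match: the product of the empirical marginals $\bigotimes_s\hat\mu^n_s$ lies in the ambiguity set even with all $\alpha_s=0$, yet its global Wasserstein distance to $\frac{1}{n}\sum_i\delta_i$ is of the order of the spread of the sampled models, so the inequality $d(\mu,\hat\mu_n)\le\alpha$ that your KR step relies on does not follow from membership in the product of balls. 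The repair is precisely the per-episode decomposition above: KR duality (or the explicit coupling that draws $i$ uniformly and then $p\sim\mu_i:=\bigotimes_s\mu_s^i$, paired with $\hat p_i$) controls the gap for the mixture $\bar\mu=\frac{1}{n}\sum_i\mu_i$, not for $\mu$ itself, so you must either state and prove the bound for $\bar\mu$ -- which is what the paper implicitly does when it replaces $\mu$ by its ``constrained law'' -- or add an argument relating $\mathbb{E}_{\mu}$ to $\mathbb{E}_{\bar\mu}$. With that adjustment, and granting your LSTD assumptions, your argument goes through and is in one respect stronger than the paper's, since it supplies an explicit finite $\eta_s^\pi$; your closing remark that the rectangular structure cannot be exploited coordinate-wise because $\w_p$ depends on the whole model is accurate and matches the paper's use of a global product norm.
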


\section{Out-of-Sample Performance Guarantees}
\label{section:out_of_sample}
Assume that at each episode, a transition model $p$ has been generated by some unknown distribution $\mu$. A potential defect of non-robust MDP formulations is that optimal policies may perform poorly once deployed on new data. This section gives out-of-sample performance guarantees in WDRMDPs with carefully determined Wasserstein-ball radii. 

From a statistical learning viewpoint, transition model estimates can be seen as a training set $\widehat{\Prec}_n := (\widehat{p}_i)_{1\leq i\leq n}$ following a distribution $\mu^n$ supported on $\Prec^n$. To avoid cluttered notation, we shall denote by $\hat{\pi}^*:= \pi^*_{\mathfrak{M}_\alpha(\hat{\mu}_n)}$ an optimal policy for the WDRMDP induced by the training set and $\hat{v}^* := v^{\hat{\pi}^*}_{\mathfrak{M}_\alpha(\hat{\mu}_n)}$ the optimal distributionally robust value function. Then, the out-of-sample performance of $\hat{\pi}^*$ is given by $\E_{p\sim\mu}[v^{\hat{\pi}^*}_p(s)]$. 
Defining the event $A:= \left\{\widehat{p} \mid \E_{p\sim\mu}[v^{\hat{\pi}^*}_p(s)] \geq v^{\hat{\pi}^*}_{\mathfrak{M}_\alpha(\hat{\mu}_n)}(s), \quad \forall s\in\St\right\}$ with $v^{\hat{\pi}^*}_{\mathfrak{M}_{\alpha(n,\epsilon)}(\widehat{\mu}_n)}(s)$ representing the certificate of the out-of-sample performance, Thm.~\ref{theorem:out_of_sample} establishes that $\hat{\pi}^*$ satisfies
$
\mu^n\left(A\right) \geq 1-\epsilon,
$
where $\epsilon\in(0,1)$ is a confidence level
\cite{fournier2015rate}. This bound is the best we can hope for, as the true generating distribution $\mu$ is unknown.

\begin{theorem}[\textbf{Finite-sample Guarantee}]
\label{theorem:out_of_sample}
Let $\epsilon\in (0,1), m:= \abs{\St}\times\abs{\A}$ and $\langle \St, \A, r, \mathfrak{M}_{\alpha(n,\epsilon)}(\widehat{\mu}_n)\rangle$ a WDRMDP. Denote by $\hat{v}^*$ and $\hat{\pi}^*$ its optimal value and policy, respectively. If the radius of the Wasserstein ball at $s\in\St$ satisfies 
\begin{equation*}
\alpha_s(n_s,\epsilon) := \left\{
\begin{split}
    &c_0 \left(\frac{1}{n_s c_2}\log\left(\frac{c_1}{\epsilon}\right)\right)^{1/(m \vee 2)} &&\text{ if } n_s \geq C_{m}^\epsilon\\
    &c_0  && \text{ otherwise}
\end{split}
\right.
\end{equation*}
with $C_{m}^\epsilon= \frac{1}{c_2}\log\left(\frac{c_1}{\epsilon}\right)$ and $n_s = \sum_{i\in[n], a\in\A,s'\in\St}n_i(s,a,s')$, then 
$\mu^n\left(A\right)\geq 1-\epsilon$, where $c_0, c_1, c_2\in\mathbb{R}^+$ only depend on $m\neq 2$\footnote{A comparable conclusion can be established for $m=2$, but we omit it due to its limited interest.}. 
\end{theorem}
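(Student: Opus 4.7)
The plan is to reduce the theorem to a single concentration event: namely, that with $\mu^n$-probability at least $1-\epsilon$, the unknown data-generating distribution $\mu$ itself lies in the Wasserstein ball $\mathfrak{M}_{\alpha(n,\epsilon)}(\widehat{\mu}_n)$. Once this inclusion holds, the event $A$ follows mechanically: since $\hat{v}^*(s) = \inf_{\nu \in \mathfrak{M}_{\alpha(n,\epsilon)}(\widehat{\mu}_n)} \mathbb{E}_{p\sim\nu}[v^{\hat{\pi}^*}_p(s)]$, plugging in $\nu = \mu$ directly yields $\mathbb{E}_{p\sim\mu}[v^{\hat{\pi}^*}_p(s)] \geq \hat{v}^*(s)$ for every $s\in\St$. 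So the whole argument boils down to calibrating the radii $\alpha_s(n_s,\epsilon)$ so that the inclusion event $\{\mu \in \mathfrak{M}_{\alpha(n,\epsilon)}(\widehat{\mu}_n)\}$ has probability at least $1-\epsilon$.

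Next, I would exploit rectangularity, which factorises both $\mu = \bigotimes_s \mu_s$ and the ball $\mathfrak{M}_\alpha(\widehat{\mu}_n) = \bigotimes_s \mathfrak{M}_{\alpha_s}(\widehat{\mu}^n_s)$. Hence the global inclusion reduces to the per-state containments $\{d(\widehat{\mu}^n_s, \mu_s) \leq \alpha_s\}$. Each $\widehat{\mu}^n_s$ is an empirical mean of $n_s$ i.i.d.\ draws from $\mu_s$, and because $\mathcal{P}_s$ is closed, convex and compact, $\mu_s$ is supported on a bounded set and satisfies all moment assumptions trivially. This is precisely the regime covered by the finite-sample concentration inequality of \citet{fournier2015rate} for the $1$-Wasserstein distance between an empirical measure and its population counterpart in dimension $m$, which yields, for $\alpha$ below a dimension-dependent threshold,
\[
\mathbb{P}\bigl(d(\widehat{\mu}^n_s, \mu_s) > \alpha\bigr) \leq c_1 \exp\bigl(-c_2 \, n_s \, \alpha^{m \vee 2}\bigr),
\]
with constants $c_0, c_1, c_2$ depending only on $m$ (the dimension of the ambient space $\mathcal{M}(\St)^{\abs{\A}}$ that hosts each $\mu_s$) and on the diameter of $\mathcal{P}_s$.

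Inverting this tail bound produces the explicit radius in the theorem: setting the right-hand side equal to $\epsilon$ and solving for $\alpha$ gives $\alpha \propto \bigl(\tfrac{1}{n_s c_2}\log(c_1/\epsilon)\bigr)^{1/(m \vee 2)}$, which after absorption of a constant factor is exactly the stated $\alpha_s(n_s,\epsilon)$. The side condition $n_s \geq C_m^\epsilon = \tfrac{1}{c_2}\log(c_1/\epsilon)$ is precisely what is needed to keep the resulting $\alpha$ within the validity regime of the concentration inequality, \ie at most $c_0$. When $n_s < C_m^\epsilon$ the inequality is vacuous, so one simply picks $\alpha_s = c_0$ large enough that $\mathfrak{M}_{\alpha_s}(\widehat{\mu}^n_s)$ contains every probability measure supported on $\mathcal{P}_s$, making the inclusion automatic.

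The step I expect to be the main obstacle is the passage from these per-state tail bounds to the single global confidence $\epsilon$ appearing in the statement. Since $A$ is a joint event over all $s\in\St$, a naive union bound would rescale $\epsilon$ by a factor of $\abs{\St}$ and therefore inflate $\alpha_s$; reconciling this with the clean single-$\epsilon$ formula requires absorbing the union-bound factor into the constants $c_0, c_1, c_2$, or else exploiting the independence of the per-$(s,a)$ empirical measures under the rectangular product structure of $\widehat{\mu}_n$ together with the per-state counts $n_s$ to avoid the factor altogether. Once this constant bookkeeping is settled, combining the three ingredients above immediately yields $\mu^n(A) \geq 1 - \epsilon$.
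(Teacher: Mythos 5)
Your proposal follows the same skeleton as the paper's proof---calibrate the per-state radii via the Fournier--Guillin concentration inequality for the empirical measure in Wasserstein distance, show that containment of the true $\mu$ in the ambiguity set implies the out-of-sample inequality, then aggregate over states---but the middle step is handled by a genuinely different and more elementary argument. You observe that, under the paper's \emph{static} definition $v^{\hat{\pi}^*}_{\mathfrak{M}_\alpha(\hat{\mu}_n)}(s)=\inf_{\nu\in\mathfrak{M}_\alpha(\hat{\mu}_n)}\E_{p\sim\nu}[v^{\hat{\pi}^*}_p(s)]$, the event $A$ follows immediately by substituting $\nu=\mu$ once $\mu\in\mathfrak{M}_\alpha(\hat{\mu}_n)$. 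The paper instead introduces the distributionally robust Bellman operator $T_{\mathfrak{M}_\alpha(\hat{\mu})}$, proves $(T^{\hat{\pi}^*_s}_{\mu_s})^k v\geq(T_{\mathfrak{M}_\alpha(\hat{\mu}_n)})^k v$ by induction on $k$, and passes to the limit using the contraction results of Chen et al. That detour buys the identification of the certificate with the quantity computed by dynamic programming (\ie the consistency of the static and dynamic formulations under rectangularity), which your one-liner silently takes for granted; given how the paper defines the distributionally robust value function, your shortcut is legitimate for the theorem as stated. Your treatment of the radius formula, of the threshold $C_m^\epsilon$, and of the fallback $\alpha_s=c_0$ as the Wasserstein diameter of $\mathcal{M}(\Prec_s)$ all match the paper.

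The one step you flag as unresolved---passing from per-state confidence $1-\epsilon$ to the joint event over all $s\in\St$---is indeed the weak point, and you should know that the paper does not resolve it cleanly either: it bounds $\mu^n(\{\forall s:\E_{p\sim\mu}[v^{\hat{\pi}^*}_p(s)]<v^{\hat{\pi}^*}_{\mathfrak{M}_\alpha(\hat{\mu}_n)}(s)\})$ by $\prod_s\epsilon$ and then ``takes the complementary event,'' but the complement of that all-states-fail event is not $A$; the complement of $A$ is the union over $s$ of the failure events, so a union bound (costing a factor $\abs{\St}$, absorbable into the constants or into a per-state confidence $\epsilon/\abs{\St}$) or the product bound $(1-\epsilon)^{\abs{\St}}$ under independence is what one actually gets. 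Your instinct that this is where the constant bookkeeping must be done is correct, and your proposed fixes are the right ones.
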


In the above theorem, $c_0$ corresponds to the diameter of the whole space $\mathcal{M}(\Prec_s)$. Therefore, if the sample size is smaller than $C_m^{\epsilon}$, then the WDRMDP as defined in Thm.~\ref{theorem:out_of_sample} becomes a robust MDP of uncertainty set $\Prec_s$. Additionally, since the radius $\alpha_s(n_s,\epsilon)$ tends to $0$ as $n_s$ goes to infinity, the solution becomes less conservative as the sample size increases. 
Thm. \ref{theorem:out_of_sample} extends \citet{yang2018wasserstein}[Thm. 3] to MDPs and ensures that with high probability, the distributionally robust optimal policy cannot yield lower value than the certificate performance $v^{\hat{\pi}^*}_{\mathfrak{M}_\alpha(\hat{\mu}_n)}(s)$ deduced from the training set. As a result, the regularized value function a fortiori satisfies this performance guarantee.

\section{Discussion}

Our study facilitates robust RL by establishing regularized value function as a lower-bound of a distributionally robust value. Future work should analyze the tightness and the asymptotic consistency of our approach with increasing sample size. Other compelling directions include the extension of our results to non-linear function approximation and deep architectures \cite{levine2017shallow}. 
It would also be interesting to consider extensions of our regularized formulation to policy optimization, and build a connection with regularized policy search. 
\section*{Acknowledgements}
The authors would like to thank Shimrit Shtern for pointers to the relevant literature. Thanks also to Shirli Di Castro Shashua, Guy Tennenholtz and Nadav Merlis for their comprehensive review of an earlier draft.

\bibliography{reg}
\bibliographystyle{icml2020}

\newpage
\onecolumn
\appendix

\section{Preliminaries on Convex Analysis}
For completeness, this section provides the theoretical background used throughout the proofs. We first recall some definitions and fundamental results of convex analysis. This preliminary study will play a crucial role in our regularization method. 

Consider a convex Euclidean space $X$ equipped with a scalar product $\innorm{\cdot, \cdot}$. Given a norm $\norm{\cdot}$ over $X$, the dual norm is defined through $\norm{\cdot}_* = \sup_{\norm{x}\leq 1}\innorm{\cdot, x}$. Further denote by $\overline{\mathbb{R}} = [-\infty, +\infty]$ the extended reals and $U: X \rightarrow \overline{\mathbb{R}}$ an extended real-valued function over $X$. We then define the following.

\begin{definition}
\begin{itemize*}
    \item [(a)] \textbf{Proper Function. } We say that $U$ is proper if $U >-\infty$ and there exists $x\in X $ such that $U(x) <+\infty$.\\
    \item [(b)] \textbf{Closed Function. } We say that $U$ is a closed function if its epigraph
    $\mathbf{epi}(U) := \{(x,c)\in X\times \mathbb{R} | U(x) \leq c\}
    $
is a closed subset of $X\times \mathbb{R}$.\\
\textbf{Convex Closure. } The convex closure $\Breve{\text{cl}}(U)$ of $U$ is the greatest closed and convex function upper-bounded by $U$ \ie if $U_c$ is a closed and convex function that satisfies $U_c\leq U$, then $U_c\leq \Breve{\text{cl}}(U)$.
\end{itemize*}
\end{definition}

In other words, a function is proper if and only if its epigraph is nonempty and does not contain a vertical line. Moreover, when dealing with a non-convex function, we may work with its convex closure instead, in order to apply standard results from convex analysis. In particular, if the convex closure of a function is proper, then it coincides with its double conjugate, as we detail below. 

\begin{definition}[\textbf{Conjugate Function}]
The Legendre-Fenchel transform (or conjugate function) of $U$ is the mapping $U^*: X \rightarrow \overline{\mathbb{R}}$ defined by 
$$
U^*(y) := \sup_{x\in X}\langle  y, x\rangle - U(x),
$$
Denoting by $X^*:= \{y: X\rightarrow \mathbb{R} | y \text{ is linear}\}$ the dual space of $X$, we further define the conjugate function of $U^{*}$ as
$$
U^{**}(x):=\sup_{y\in X^*}\langle y, x\rangle - U^*(y),
$$
which is also the double conjugate of $U$. 
\end{definition}

Regardless of the initial function $U$, its conjugate $U^*$ is convex and closed but not necessarily proper. In fact, if $U$ is convex, then $U^*$ is proper if and only if $U$ is, as stated in the fundamental theorem below (see \citet{bertsekas2009convex, barbu2012convexity} for a proof). 

\begin{theorem}[\textbf{Conjugacy Theorem}]
\label{theorem:conjugacy_thm}
The following holds:\\
\begin{itemize*}
    \item[(a)] $U \geq U^{**}$\\
    
    \item[(b)] If $U$ is convex and closed, then $U$ is proper if and only if $U^*$ is.\\
    
    \item[(c)] If $U$ is closed, proper and convex, then $U = U^{**}$.\\

    \item[(d)] The conjugates of $U$ and $\Breve{\text{cl}}(U)$ are equal. 
\end{itemize*}
\end{theorem}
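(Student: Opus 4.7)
The plan is to establish the four parts in the order (a), (d), (b), (c), since each builds on the previous and the heart of the theorem is the Fenchel--Moreau identity in (c), which requires a full-strength hyperplane separation argument. Parts (a) and (d) are essentially algebraic manipulations of the definition of conjugation; part (b) uses properness to extract an affine minorant; part (c) is the main technical step.

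Part (a) is the Young--Fenchel inequality. By definition of the conjugate, for every pair $(x,y) \in X \times X^{*}$ we have $U^{*}(y) \geq \langle y, x\rangle - U(x)$, which rearranges to $U(x) \geq \langle y, x\rangle - U^{*}(y)$; taking the supremum over $y \in X^{*}$ yields $U(x) \geq U^{**}(x)$. For part (d), the inequality $\Breve{\text{cl}}(U) \leq U$ combined with the order-reversing nature of conjugation gives $U^{*} \leq (\Breve{\text{cl}}(U))^{*}$. For the reverse direction, I would observe that each affine function $x \mapsto \langle y, x\rangle - U^{*}(y)$ is closed and convex and lies below $U$ by definition of $U^{*}$, so it must also lie below the greatest closed convex minorant $\Breve{\text{cl}}(U)$; rearranging and taking the supremum over $x$ yields $(\Breve{\text{cl}}(U))^{*}(y) \leq U^{*}(y)$.

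For part (b), assume first that $U$ is proper. Pick $x_{0}$ with $U(x_{0}) < \infty$; this gives $U^{*}(y) \geq \langle y, x_{0}\rangle - U(x_{0}) > -\infty$ for every $y$, so $U^{*}$ is never $-\infty$. To show $U^{*}$ is not identically $+\infty$, I would invoke Hahn--Banach on the closed convex set $\mathbf{epi}(U)$ to separate it from a point $(x_{1}, c)$ with $c < U(x_{1})$, extracting a supporting affine minorant $\ell(x) = \langle y_{0}, x\rangle - \beta$ of $U$, from which $U^{*}(y_{0}) \leq \beta < \infty$. The converse implication (if $U^{*}$ is proper then $U$ is) follows by applying the same reasoning to $U^{*}$ together with part (c).

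The main obstacle is part (c), the Fenchel--Moreau identity. Given $U$ closed, proper, convex, part (a) already yields $U \geq U^{**}$, so the work is in the reverse inequality. My plan is: fix $x_{0} \in X$ and $c_{0} < U(x_{0})$, and produce an affine minorant $\ell(x) = \langle y_{0}, x\rangle - \beta$ of $U$ with $\ell(x_{0}) > c_{0}$; by definition of $U^{**}$ this gives $U^{**}(x_{0}) \geq \ell(x_{0}) > c_{0}$, and sending $c_{0} \uparrow U(x_{0})$ concludes. Such an $\ell$ arises by applying a geometric Hahn--Banach theorem to strictly separate the closed convex set $\mathbf{epi}(U)$ from the point $(x_{0}, c_{0}) \notin \mathbf{epi}(U)$. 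The separating functional has the form $(y_{0}, \lambda) \in X^{*} \times \mathbb{R}$, and the delicate case is $\lambda = 0$, i.e., a vertical hyperplane, from which no affine minorant can be read off directly. In that case I would combine the vertical separator with a nonvertical affine minorant of $U$ already guaranteed by part (b), taking a convex combination whose weights push the resulting hyperplane into the nonvertical regime while preserving a strict gap at $(x_{0}, c_{0})$. Treating this vertical configuration cleanly is the principal technical obstacle.
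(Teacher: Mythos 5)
The paper does not actually prove this theorem: it quotes it as a standard result and defers to \citet{bertsekas2009convex} and \citet{barbu2012convexity} for the proof. Your sketch is precisely the textbook argument those references contain --- Young--Fenchel for (a), order reversal of conjugation plus the affine-minorant characterization of the convex closure for (d), epigraph separation for (b), and the Fenchel--Moreau separation argument with the vertical-hyperplane correction for (c) --- so in the only sense available you are following the same route the paper relies on. One logical wrinkle worth fixing: in the converse direction of (b) you invoke part (c), which presupposes that $U$ is proper, i.e.\ exactly what you are trying to establish, so as written that step is circular. The repair is immediate and does not need (c): if $U^{*}$ is proper, pick $y_{0}$ with $U^{*}(y_{0})<\infty$; then $U(x)\geq\langle y_{0},x\rangle-U^{*}(y_{0})>-\infty$ for all $x$, and if $U$ were identically $+\infty$ then $U^{*}$ would be identically $-\infty$, contradicting properness of $U^{*}$; hence $U$ is proper.
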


Additionally to this standard theorem, we will be using the following result.

\begin{proposition}
\label{prop:convex_closure_proper}
If $X$ is compact and $U$ is a proper closed function over $X$, then its convex closure $\Breve{\text{cl}}(U)$ is also proper. 
\end{proposition}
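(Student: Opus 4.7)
A function is proper iff it never takes $-\infty$ and takes a finite value somewhere, so I need to verify both conditions for $\Breve{\text{cl}}(U)$. The finite-value side is immediate: since $U$ is proper there is $x_0\in X$ with $U(x_0)<+\infty$, and since $\Breve{\text{cl}}(U)\leq U$ by definition, $\Breve{\text{cl}}(U)(x_0)\leq U(x_0)<+\infty$. So the work is entirely on the lower bound: I must rule out $\Breve{\text{cl}}(U)(x)=-\infty$ for any $x\in X$.

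The main idea is that on a compact domain a proper closed function is actually bounded below by a finite constant, and any constant function is trivially closed and convex, hence a candidate minorant in the defining supremum of the convex closure. Concretely, since $U$ is closed, its epigraph is closed in $X\times\mathbb{R}$, which is equivalent to $U$ being lower semicontinuous on $X$. Because $X$ is compact, a standard Weierstrass-type argument shows that a lsc function on $X$ attains its infimum: take a minimizing sequence $(x_n)$, extract a convergent subsequence $x_{n_k}\to x^*\in X$, and use lsc to get $U(x^*)\leq \liminf_k U(x_{n_k})=\inf_X U$. Since $U$ is proper we have $U(x^*)>-\infty$, so setting $c:=U(x^*)\in\mathbb{R}$ gives $U\geq c$ pointwise on $X$.

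Now the constant function $x\mapsto c$ is closed, convex, and satisfies $c\leq U$ on $X$. By the defining maximality property of $\Breve{\text{cl}}(U)$ as the greatest closed convex function upper-bounded by $U$, we conclude $\Breve{\text{cl}}(U)\geq c>-\infty$. Combined with the first paragraph, this yields both clauses of properness and completes the proof.

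\textbf{Main obstacle.} The only non-routine step is justifying that a proper closed function on compact $X$ is bounded below; this is essentially Weierstrass for lsc functions on compacts, but must be invoked carefully for extended real-valued $U$ (noting that properness excludes $-\infty$ values so the attained infimum is a genuine real number, not $-\infty$). Once this is in hand, the rest is a one-line application of the definition of convex closure.
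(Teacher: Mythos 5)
Your proof is correct and follows essentially the same route as the paper: both invoke Weierstrass on the compact domain to show the closed (lsc) proper function $U$ attains a finite infimum, and then transfer this lower bound to $\Breve{\text{cl}}(U)$, using $\Breve{\text{cl}}(U)\leq U$ for the finiteness side. The only cosmetic difference is the last step, where you take the constant $c=\inf_X U$ as a closed convex minorant and apply the maximality in the definition of the convex closure, whereas the paper appeals to the identity $\inf_X\Breve{\text{cl}}(U)=\inf_X U$; your variant is, if anything, more directly justified by the paper's own definition.
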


\begin{proof}
We apply the Weierstrass theorem \cite{barbu2012convexity}[Thm. 2.8.]: Since $U$ is closed on $X$ compact, it takes a minimal value on $X$. Therefore, there exists $x_0\in X$ such that $\inf_{x\in X}U(x)= U(x_0)$. Moreover, we have $\inf_{x\in X}\Breve{\text{cl}}(U)(x) = \inf_{x\in X}U(x)$, so $\inf_{x\in X}\Breve{\text{cl}}(U)(x) = U(x_0)$. It follows that $\Breve{\text{cl}}(U) > -\infty$. On the other hand, since $U$ is proper, there exists $x\in X$ such that $U(x)<+\infty$. By definition of the convex closure, $\Breve{\text{cl}}(U)(x) \leq U(x)$ so $\Breve{\text{cl}}(U)$ is proper. 
\end{proof}

\section{Regularization and Wasserstein DRMDPs}
\subsection{Proof of Theorem \ref{theorem:tabular_equivalence}}
We first establish the following lemma. 

\begin{lemma}
\label{lemma_appendix:conjugate_proper}
For all policy $\pi\in\Pi$ and state $s\in\St$ define the mapping $u_s^\pi: \Prec \rightarrow \mathbb{R}$ as $p \mapsto v_{p}^\pi(s)$. Then, the following holds:\\
\begin{itemize*}
\item [(i)] $u_s^\pi$ is proper.\\
\item [(ii)] If $\Prec$ is closed, then $u_s^\pi$ is continuous and thus, it is a closed function.
\end{itemize*}
\end{lemma}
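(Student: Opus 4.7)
The plan is to verify both claims directly from the definition of the value function and the standard Bellman representation, avoiding any heavy machinery.

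For part (i), I would simply bound $u_s^\pi$ in absolute value. Since rewards are bounded by $R_{\max}$ and the geometric series $\sum_{t=0}^\infty \gamma^t$ converges with $\gamma \in [0,1)$, we obtain for every $p \in \Prec$
\[
|u_s^\pi(p)| = \left| \mathbb{E}_p^\pi\!\left[\sum_{t=0}^\infty \gamma^t r(s_t,a_t) \,\Big|\, s_0=s\right] \right| \leq \frac{R_{\max}}{1-\gamma}.
\]
Hence $u_s^\pi$ is finite-valued on $\Prec$, which immediately yields both $u_s^\pi > -\infty$ everywhere and the existence of at least one point at which $u_s^\pi < +\infty$. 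This settles properness.

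For part (ii), I would exploit the closed-form Bellman representation. Fixing $\pi \in \Pi$, let $P_p^\pi \in \mathbb{R}^{|\St|\times|\St|}$ denote the policy-induced transition matrix whose entries are $P_p^\pi(s,s') = p(s'|s,\pi(s))$, and let $r^\pi \in \mathbb{R}^{|\St|}$ be the corresponding reward vector. Then $v_p^\pi = (I - \gamma P_p^\pi)^{-1} r^\pi$, where invertibility holds because $\|\gamma P_p^\pi\|_\infty = \gamma < 1$. The map $p \mapsto P_p^\pi$ is linear (hence continuous), and matrix inversion is continuous on the open set of invertible matrices, so the composition $p \mapsto v_p^\pi(s)$ is continuous on $\Prec$.

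Continuity of $u_s^\pi$ combined with closedness of $\Prec$ then yields closedness of the function: if $(p_k, c_k) \in \mathbf{epi}(u_s^\pi)$ converges to $(p,c)$ in $\Prec \times \mathbb{R}$, then $p \in \Prec$ since $\Prec$ is closed, and $u_s^\pi(p) = \lim_k u_s^\pi(p_k) \leq \lim_k c_k = c$, so $(p,c) \in \mathbf{epi}(u_s^\pi)$. I do not anticipate a genuine obstacle here; the only subtlety worth flagging is to invoke $\gamma < 1$ explicitly when justifying invertibility of $I - \gamma P_p^\pi$ uniformly in $p$, since this is what makes the closed-form representation well-defined and continuous across all of $\Prec$.
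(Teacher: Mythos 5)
Your proof is correct. Part (i) is exactly the paper's argument: the uniform bound $\abs{v_p^\pi(s)} \leq R_{\max}/(1-\gamma)$ from bounded rewards and the geometric series. For part (ii), however, you take a genuinely different route. The paper avoids the closed-form resolvent: it fixes a sequence $p_n \to p$ in $\Prec$, first shows pointwise convergence $T_{p_n}^\pi v \to T_p^\pi v$ of the Bellman operators (using finiteness of $\St$ and $\A$ to exchange limit and sum), and then combines the fixed-point characterization $v_{p_n}^\pi = T_{p_n}^\pi v_{p_n}^\pi$ with the $\gamma$-contraction of $T_p^\pi$ in an $\varepsilon$-argument to conclude $v_{p_n}^\pi(s) \to v_p^\pi(s)$. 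You instead write $v_p^\pi = (I-\gamma P_p^\pi)^{-1} r^\pi$ and argue continuity as the composition of the linear map $p \mapsto P_p^\pi$ with matrix inversion, invertibility being guaranteed uniformly by $\norm{\gamma P_p^\pi}_\infty = \gamma < 1$; your epigraph argument for deducing closedness from continuity on the closed set $\Prec$ is also fine and is what the paper leaves implicit. Both arguments are valid in the paper's finite-state, finite-action setting. Your resolvent argument is more elementary and self-contained there, but it leans on the explicit matrix representation, which only exists because the state space is finite and the policy is evaluated in closed form; the paper's operator-theoretic argument is slightly longer yet carries over verbatim to settings where no such closed form is available (it only needs continuity of $T_p^\pi$ in $p$ and the contraction property), and as a by-product it records the continuity of the Bellman operator with respect to the transition model, which the authors explicitly flag in their proof.
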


\begin{Proof}
\emph{\textbf{Claim (i).} }By assumption, the reward function is bounded by $R_{\max}$, so we have
$$
\abs*{v_{p}^\pi(s)} =  \abs*{\mathbb{E}_p^\pi\left[\sum_{t=0}^\infty \gamma^t r(s_t,a_t)| s_0=s\right]} \leq \sum_{t=0}^\infty \gamma^t R_{\max }= \frac{R_{\max}}{1-\gamma},
$$
and $u_s^\pi$ is proper.

\emph{\textbf{Claim (ii).} } Denote by $(p_n)_{n\geq 1}$ a sequence that converges to $p$. Since $\Prec$ is closed, $p\in\Prec$ and $u_s^\pi(p)$ is well defined. Moreover, $u_s^\pi(p)= v_p^\pi(s)$. For all $n\geq 1$ we introduce the Bellman operator  $T_{p_n}^\pi$ w.r.t. transition $p_n$:
$$
T_{p_n}^\pi v(s) =  r(s,  \pi(s)) + \gamma  \sum_{s'\in\St}p_n(s, \pi(s),s')v(s'),
$$
and $T_{p}^\pi$ the Bellman operator w.r.t. transition $p$:
$$
T_{p}^\pi v(s) =  r(s,  \pi(s)) + \gamma  \sum_{s'\in\St}p(s, \pi(s),s')v(s').
$$
Then, we have
\begin{equation*}
    \begin{split}
    \lim_{n\rightarrow\infty}T_{p_n}^\pi v(s)  &=  r(s,  \pi(s)) + \gamma \lim_{n\rightarrow\infty}   \sum_{s'\in\St}p_n(s, \pi(s),s')v(s')\\
    &\overset{(a)}{=}  r(s,  \pi(s)) + \gamma    \sum_{s'\in\St}\lim_{n\rightarrow\infty}p_n(s, \pi(s),s')v(s')\\
    &=  r(s,  \pi(s)) + \gamma    \sum_{s'\in\St}p(s, \pi(s),s')v(s')\\
    &= T^\pi_p v(s),
    \end{split}
\end{equation*}
where equality $(a)$ holds since $\St$ and $\A$ are finite sets. Remark that here, we established the continuity of the Bellman operator with respect to the transition function.

As a result, for all $\epsilon > 0$, there exists $n_{\epsilon}$ such that for all $n\geq n_{\epsilon}$ we have $\abs{T_{p_n}^\pi v_{p_n}^\pi(s) -T^\pi_p v_{p_n}^\pi(s)} \leq (1-\gamma)\epsilon$. Using the fact that $v_{p_n}^\pi$ (resp. $v_{p}^\pi$) is the unique fixed point of $T_{p_n}^\pi $ (resp. $T_{p}^\pi $) and that  $T_{p}^\pi$ is a $\gamma$-contraction, for all $n\geq n_{\epsilon}$ we can write:
\begin{equation*}
\begin{split}
  \abs{v_{p_n}^\pi(s) - v_{p}^\pi(s)} &= \abs{T_{p_n}^\pi v_{p_n}^\pi(s) - T_p^\pi v_{p}^\pi(s)} \\
  &\leq \abs{T_{p_n}^\pi v_{p_n}^\pi(s) -T^\pi_p v_{p_n}^\pi(s)}  +\abs{T^\pi_p v_{p_n}^\pi(s) -  T^\pi_p v_{p}^\pi(s)} \\
  &\leq (1-\gamma)\epsilon + \gamma \abs{ v_{p_n}^\pi(s) -   v_{p}^\pi(s)},
\end{split}    
\end{equation*}
so $(1-\gamma)\abs{v_{p_n}^\pi(s) - v_{p}^\pi(s)} \leq  (1-\gamma)\epsilon$. Since $\gamma\in (0,1)$, $(1-\gamma)$ is positive and dividing both sides by $(1-\gamma)$ yields $\abs{v_{p_n}^\pi(s) - v_{p}^\pi(s)} \leq \epsilon$. Based on the fact that $\epsilon >0$ is arbitrary, we have shown that $v_{p_n}^\pi(s) \rightarrow_{n\rightarrow\infty} v_{p}^\pi(s)$, which concludes the proof.

\end{Proof}

We are now ready to prove Thm.\ref{theorem:tabular_equivalence}, whose full statement is recalled below: 
\begin{theorem*}[\textbf{Tabular Case}]
Let the WDRMDP $\langle \St, \A, r, \mathfrak{M}_\alpha(\hat{\mu}_n)\rangle$ as defined above and $L_{\gamma,\beta, R_{\max}} := \frac{\beta\gamma R_{\max}}{(1-\gamma)^2}$ where $\beta\geq 0$. Then, for all $\pi\in\Pi, s\in\St,$ we have:
$v_{\mathfrak{M}_\alpha(\hat{\mu}_n)}^\pi(s) \geq \frac{1}{n}\sum_{i=1}^n\limits v^\pi_{\hat{p}_i}(s) - \kappa_s^{\pi} \alpha,$
where $\kappa_s^{\pi}\geq 0$ depends on $\pi$ and $s$ while satisfying $\kappa_s^{\pi} \leq L_{\gamma,\beta, R_{\max}}$.
\end{theorem*}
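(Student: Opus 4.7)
The plan is to combine two classical ingredients: (i) Lipschitz continuity of the value-function map $u_s^\pi: p \mapsto v_p^\pi(s)$ on the compact uncertainty set $\Prec$, and (ii) the transport interpretation of the $1$-Wasserstein distance, which converts such a Lipschitz bound into a bound on the difference of expectations under any two measures.

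First, I would upgrade the continuity established in Lemma \ref{lemma_appendix:conjugate_proper} to Lipschitz continuity. Subtracting the Bellman fixed-point equations for two models $p_1, p_2 \in \Prec$ and rearranging yields
\begin{equation*}
v_{p_1}^\pi - v_{p_2}^\pi \;=\; \gamma\,(I - \gamma P_1^\pi)^{-1}\,(P_1^\pi - P_2^\pi)\,v_{p_2}^\pi,
\end{equation*}
where $P_i^\pi$ is the policy-induced kernel under $p_i$. Noting that $(1-\gamma)\,e_s^\top (I - \gamma P_1^\pi)^{-1}$ is the discounted occupation measure starting from $s$ under $\pi$ and $p_1$, and that $\|v_{p_2}^\pi\|_\infty \leq R_{\max}/(1-\gamma)$, one obtains a pointwise bound $|u_s^\pi(p_1) - u_s^\pi(p_2)| \leq \kappa_s^\pi \|p_1 - p_2\|$; the factor $\beta$ entering $L_{\gamma,\beta,R_{\max}}$ absorbs the norm-equivalence between the $L^1$ matrix norm naturally arising in the estimate above and the norm $\|\cdot\|$ that defines the Wasserstein metric on $\Prec_s$, so that $\kappa_s^\pi \leq L_{\gamma,\beta,R_{\max}}$ holds uniformly in $(s,\pi)$.

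Second, fix any $\mu \in \mathfrak{M}_\alpha(\hat{\mu}_n)$ and any coupling $\gamma \in \Gamma(\mu, \hat{\mu}_n)$ in the sense of Definition~1. The Lipschitz estimate gives
\begin{equation*}
\mathbb{E}_{p \sim \mu}[u_s^\pi(p)] - \mathbb{E}_{p \sim \hat{\mu}_n}[u_s^\pi(p)] \;\geq\; -\kappa_s^\pi \int \|p_1 - p_2\|\,\gamma(dp_1,dp_2).
\end{equation*}
Taking the infimum over couplings yields $\mathbb{E}_{p \sim \mu}[u_s^\pi(p)] - \mathbb{E}_{p \sim \hat{\mu}_n}[u_s^\pi(p)] \geq -\kappa_s^\pi\,d(\mu,\hat{\mu}_n) \geq -\kappa_s^\pi \alpha$ since $\mu$ lies in the Wasserstein ball of radius $\alpha$. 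Replacing $\mathbb{E}_{p \sim \hat{\mu}_n}[u_s^\pi(p)]$ by the sample average $\tfrac{1}{n}\sum_{i=1}^n v_{\hat{p}_i}^\pi(s)$ and taking the infimum over $\mu$ completes the proof.

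The main obstacle will be reconciling the rectangular, per-state structure $\mathfrak{M}_\alpha(\hat{\mu}_n) = \bigotimes_s \mathfrak{M}_{\alpha_s}(\hat{\mu}_{n,s})$ with the fact that $u_s^\pi$ depends on the whole joint transition model. I plan to handle this by building a product coupling from per-state optimal couplings of each marginal $\mu_{s'}$ to $\hat{\mu}_{n,s'}$ and applying the Lipschitz estimate blockwise over $s'$, so that the $\beta$ factor in $\kappa_s^\pi$ stays free of any hidden $|\St|$ dependence. The conjugacy machinery developed in Appendix~A is not needed for this one-sided inequality---a direct coupling argument suffices---though it would be the natural tool for a matching upper bound via strong duality.
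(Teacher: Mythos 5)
Your proof is correct, and it takes a genuinely different --- and considerably more elementary --- route than the paper. The paper derives the lower bound through the full Wasserstein-DRO duality machinery: it dualizes the ball constraint with a multiplier $\lambda$, passes to the convex closure of a Lipschitz continuation $\tilde{u}_s^\pi$ of $u_s^\pi$, invokes the Conjugacy Theorem and a minimax exchange, and finally identifies $\kappa_s^\pi$ as $\sup\{\norm{z}_*: z\in\mathfrak{Z}_s^\pi\}$, the dual-norm radius of the effective domain of $(\tilde{u}_s^\pi)^*$; a separate second step bounds that radius by $L_{\gamma,\beta,R_{\max}}$ using exactly the Strehl--Littman simulation lemma you rederive via the resolvent identity. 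You instead observe that for a \emph{one-sided} inequality the trivial direction of Kantorovich--Rubinstein suffices: an $L$-Lipschitz integrand plus any coupling gives $\E_{\mu}[u_s^\pi]-\E_{\hat{\mu}_n}[u_s^\pi]\geq -L\,d(\mu,\hat{\mu}_n)\geq -L\alpha$, and your $\kappa_s^\pi$ is just the Lipschitz constant, which coincides (up to the same norm-equivalence factor $\beta$) with the paper's. Two remarks. First, your worry about rectangularity is handled most cleanly not by a product of per-state optimal couplings but by the explicit coupling $\gamma=\frac{1}{n}\sum_{i=1}^n \mu_i\otimes\delta_i$ built from the decomposition $\mu_s=\frac{1}{n}\sum_i\mu_s^i$ with $\frac{1}{n}\sum_i\E_{\mu_s^i}\norm{p_s^i-\hat{p}_s^i}\leq\alpha_s$ (the same decomposition the paper uses); this gives $\int\norm{p-p'}\,d\gamma\leq\alpha$ directly and sidesteps the product-of-averages versus average-of-products ambiguity, which the paper itself glosses over when writing $\hat{\mu}_n=\frac{1}{n}\sum_i\delta_i$. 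Second, what the paper's heavier approach buys is the possibility of a tighter constant ($\kappa_s^\pi$ defined through the conjugate of the convex closure can be strictly smaller than the global Lipschitz constant) and the infrastructure for an exact reformulation rather than a bound --- as you correctly note at the end. For the theorem as stated, your argument proves everything claimed.
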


The proof proceeds in two steps. First, we establish the regularized value function as a lower bound of the distributionally robust value function. Then, we provide an upper bound of the regularization term, thus enabling to determine the maximal gap between both values.

\textbf{Claim (i). Let $\langle \St, \A, r, \mathfrak{M}_\alpha(\hat{\mu}_n)\rangle$ be a Wasserstein DRMDP with a radius-$\alpha$-ball ambiguity set centered at the empirical distribution (see its construction in Sec.~\ref{subsection:empirical_estimate}). Then, for all $\pi\in\Pi, s\in\St,$ we have:
$v_{\mathfrak{M}_\alpha(\hat{\mu}_n)}^\pi(s) \geq \frac{1}{n}\sum_{i=1}^n\limits v^\pi_{\hat{p}_i}(s) - \kappa_s^{\pi} \alpha,$
where $\kappa_s^{\pi}\geq 0$ depends on $\pi$ and $s$.}

\begin{proof}
For all $i\in [n]$, let $\hat{p}^i_s := \bigotimes_{a\in\A}\hat{p}^i_{s,a}$ and $\hat{\mu}_{s}^n = \frac{1}{n}\sum_{i=1}^n \delta_{\hat{p}^i_s}$, so that $\hat{\mu}_n := \bigotimes_{s\in\St}\hat{\mu}_s^n$. 
The Wasserstein distributionally robust value function is given by
$$
v^\pi_{\mathfrak{M}_\alpha(\hat{\mu}_n)}(s)  = \inf_{\mu \in  \mathfrak{M}_\alpha({\hat{\mu}}_n)} \mathbb{E}_{p\sim \mu}[v^\pi_p(s)].
$$
By construction of the ambiguity set $\mathfrak{M}_\alpha(\hat{\mu}_n)$ and the empirical distribution $\hat{\mu}_n$, the constraint $\mu \in  \mathfrak{M}_\alpha(\hat{\mu}_n)$ is equivalent to requiring $\mu_s \in  \mathfrak{M}_{\alpha_s}(\hat{\mu}^n_s)$,\ie $d(\mu_s, \hat{\mu}_{s}^n) \leq \alpha_s$ for all $s\in\St$. 
Therefore, recalling the definition of the Wasserstein metric 
$$
d(\mu_s, \hat{\mu}_{s}^n) := \min_{\gamma \in \Gamma(\mu_s, \hat{\mu}_{s}^n)} \left\{  \int_{\Prec_s\times\Prec_s} \norm{p_s - p'_{s}}\gamma(dp_s, dp'_{s}) \right\},
$$
we have $\mu_s \in  \mathfrak{M}_{\alpha_s}(\hat{\mu}^n_s)$ if and only if 
there exists $\mu^1_s,\cdots,\mu^n_s\in\mathcal{M}(\Prec_s)$ such that $\mu_s = \frac{1}{n}\sum_{i=1}^n \mu^i_s$ and 
$
\frac{1}{n}\sum_{i=1}^n \mathbb{E}_{p_s^i\sim\mu_s^i} \left[\norm{p_s^i-\widehat{p}_s^i}\right] \leq \alpha_s.
$

For all $i\in[n]$, define $p_i := \bigotimes_{s\in\St}p_s^i$, $\mu_i := \bigotimes_{s\in\St}\mu_s^i$, and the average distribution $\bar{\mu} := \frac{1}{n}\sum_{i=1}^n \mu_i$. Further consider the product space $\mathcal{M}(\St)^{\abs{\St}\times\abs{\A}}$ with the product norm corresponding to $\norm{\cdot}$ \eg if $\norm{\cdot} = \norm{\cdot}_2$ on $\Prec_s$, take the $\ell_2$-norm on $\Prec$ defined as $\norm{p}_2 := \left(\sum_{s\in\St}\norm{p_s}_2^2\right)^{1/2}$. Then, with the slight abuse of notation $\norm{p} \equiv \norm{p_s}$, the worst-case distributionally robust value function may be formulated as 
\begin{equation*}
    \begin{split}
        \inf_{\mu \in  \mathfrak{M}_\alpha({\hat{\mu}}_n)} \mathbb{E}_{p\sim \mu}[v^\pi_p(s)]  
        = \min_{\mu} \mathbb{E}_{p\sim \mu}[v^\pi_p(s)] \text{ s.t. } 
        \left\{
        \begin{split}
        &\mu = \bar{\mu}\\
        &\frac{1}{n}\sum_{i=1}^n \mathbb{E}_{p_i\sim\mu_i} \left[\norm{p_i-\widehat{p}_i}\right] \leq \alpha,
        \end{split}
        \right.
    \end{split}
\end{equation*}
for a radius $\alpha$ determined by radii $\alpha_s$-s. Thus, replacing distribution $\mu$ by its constrained law and using a duality argument, we obtain 
\begin{equation*}
    \begin{split}
        \inf_{\mu \in  \mathfrak{M}_\alpha({\hat{\mu}}_n)} \mathbb{E}_{p\sim \mu}[v^\pi_p(s)] 
        = \inf_{{\mu}_s^1,\cdots,{\mu}_s^n:
        \mu_i = \bigotimes_{s\in\St}\mu_s^i }\sup_{\lambda\geq 0} \left(\mathbb{E}_{p\sim \bar{\mu}}[v^\pi_p(s)]- \lambda \left(\alpha - \frac{1}{n}\sum_{i=1}^n\mathbb{E}_{p_i\sim\mu_i} \left[\norm{p_i-\widehat{p}_i}\right]\right)\right).
    \end{split}
\end{equation*}

Thanks to the maxmin inequality and setting all  $\mu^i_s$-s to a Dirac distribution with full mass on the worst-case transition $p\in \mathcal{M}(\St)^{\abs{\St}\times\abs{\A}}$, we can write 
\begin{equation*}
    \begin{split}
        \inf_{\mu \in  \mathfrak{M}_\alpha({\hat{\mu}}_n)} \mathbb{E}_{p\sim \mu}[v^\pi_p(s)]   
        &\geq \sup_{\lambda\geq 0} \inf_{{\mu}_s^1,\cdots,{\mu}_s^n:\mu_i = \bigotimes_{s\in\St}\mu_s^i} \left(\mathbb{E}_{p\sim \bar{\mu}}[v^\pi_p(s)]- \lambda \left(\alpha - \frac{1}{n}\sum_{i=1}^n\mathbb{E}_{p_i\sim\mu_i} \left[\norm{p_i-\widehat{p}_i}\right]\right)\right)\\
        &= \sup_{\lambda\geq 0} \frac{1}{n}\sum_{i=1}^n  \inf_{\mu_i: \mu_i = \bigotimes_{s\in\St}\mu_s^i}\left(\mathbb{E}_{p_i\sim \mu_i}\left[v^\pi_{p_i}(s)+ \lambda \norm{p_i-\widehat{p}_i}\right]\right)-\lambda \alpha\\
        &=  \sup_{\lambda\geq 0} \frac{1}{n}\sum_{i=1}^n  \inf_{p \in  \mathcal{M}(\St)^{\abs{\St}\times\abs{\A}}}\left(v^\pi_{p}(s)+ \lambda\norm{p-\widehat{p}_i}\right)-\lambda \alpha.
    \end{split}
\end{equation*}

Now, given $u_s^\pi: p\mapsto v_p^\pi(s)$, we define $\tilde{u}^{\pi}_s \colon \mathbb{R}^{\abs{\St}\times\abs{\A}\times\abs{\St}}\rightarrow\mathbb{R}$ as
\begin{alignat*}{3}
        \tilde{u}^{\pi}_s (\tilde{p}):=\inf_{p\in\mathcal{M}(\St)^{\abs{\St}\times\abs{\A}}} u^{\pi}_s(p) + L_{\gamma,\beta, R_{\max}}\norm{p-\tilde{p}}.
\end{alignat*}
This function will be used in the sequel to bound the gap between the distributionally robust value function and the regularized one. Also, it is clear that $\tilde{u}^{\pi}_s$ is a continuation function of $u_s^\pi$, 
so can write
\begin{equation*}
    \begin{split}
        \inf_{\mu \in  \mathfrak{M}_\alpha({\hat{\mu}}_n)} \mathbb{E}_{p\sim \mu}[v^\pi_p(s)]  
        &\geq  \sup_{\lambda\geq 0} \frac{1}{n}\sum_{i=1}^n  \inf_{p \in  \mathcal{M}(\St)^{\abs{\St}\times\abs{\A}}}\left(v^\pi_{p}(s)+ \lambda\norm{p-\widehat{p}_i}\right)-\lambda \alpha\\
        &\geq \sup_{\lambda\geq 0} \frac{1}{n}\sum_{i=1}^n  \inf_{\tilde{p} \in  \mathbb{R}^{\abs{\St}\times\abs{\A}\times\abs{\St}}}\left(\tilde{u}^{\pi}_s (\tilde{p})+ \lambda\norm{\tilde{p}-\widehat{p}_i}\right)-\lambda \alpha.
    \end{split}
\end{equation*}

We introduce auxiliary variables $x_1,\cdots,x_n$ in order to reformulate the bound as  
\begin{equation*}
    \begin{split}
        &\inf_{\mu \in  \mathfrak{M}_\alpha({\hat{\mu}}_n)} \mathbb{E}_{p\sim \mu}[v^\pi_p(s)] 
        \geq \max_{\lambda, x_1,\cdots,x_n} \frac{1}{n}\sum_{i=1}^n x_i-\lambda \alpha \text{ s.t. } \left\{
        \begin{split}
            &\inf_{\tilde{p} \in  \mathbb{R}^{\abs{\St}\times\abs{\A}\times\abs{\St}}}\left(\tilde{u}^{\pi}_s (\tilde{p})+ \lambda
            \norm{\tilde{p}-\widehat{p}_i}\right) \geq x_i ,\quad \forall i\in[n] \\
            &\lambda \geq 0.
        \end{split}\right.
    \end{split}
\end{equation*}
For all $i\in[n]$, let the mapping $f_i: \tilde{p} \mapsto \tilde{u}_s^\pi(\tilde{p})+ \lambda\norm{\tilde{p}-\widehat{p}_i}$. By applying Prop. 1.3.17. of \cite{bertsekas2009convex}, we obtain $\Breve{\text{cl}}(f_i)(\tilde{p})= \Breve{\text{cl}}(\tilde{u}_s^\pi)(\tilde{p})+ \lambda
            \norm{\tilde{p}-\widehat{p}_i}$, where we used the fact that $\tilde{p} \mapsto \lambda\norm{\tilde{p}-\widehat{p}_i}$ is convex and closed for all $\lambda \geq 0$. 
By Prop. 1.3.13. of \cite{bertsekas2009convex}, we thus have 
\begin{equation*}
    \begin{split}
        &\max_{\lambda, x_1,\cdots,x_n} \frac{1}{n}\sum_{i=1}^n x_i-\lambda \alpha \text{ s.t. }
        \left\{
        \begin{split}
        &\inf_{\tilde{p} \in \mathbb{R}^{\abs{\St}\times\abs{\A}\times\abs{\St}}} \left(\tilde{u}^{\pi}_s(\tilde{p})+ \lambda
            \norm{\tilde{p}-\widehat{p}_i}\right) \geq x_i ,\quad \forall i\in[n]\\
        &\lambda \geq 0
        \end{split}
        \right.\\
    &= 
    \begin{split}
        \max_{\lambda, x_1,\cdots,x_n} \frac{1}{n}\sum_{i=1}^n x_i-\lambda \alpha \text{ s.t. }
        \left\{
        \begin{split}
        &\inf_{\tilde{p} \in \mathbb{R}^{\abs{\St}\times\abs{\A}\times\abs{\St}}} \left(\Breve{\text{cl}}(\tilde{u}_s^\pi)(\tilde{p})+ \lambda
            \norm{\tilde{p}-\widehat{p}_i}\right) \geq x_i ,\quad \forall i\in[n]\\
        &\lambda \geq 0.
        \end{split}
        \right.
    \end{split}
    \end{split}
\end{equation*}
Moreover, using the definition of the dual norm $\norm{\cdot}_*$, the inequality constraints are equivalent to the following:
\begin{equation*}
    \begin{split}
        \max_{\lambda, x_1,\cdots,x_n} \frac{1}{n}\sum_{i=1}^n x_i-\lambda \alpha \text{ s.t. }
        \left\{
        \begin{split}
        &\inf_{\tilde{p} \in \mathbb{R}^{\abs{\St}\times\abs{\A}\times\abs{\St}}}\sup_{\norm{y_i}_*\leq \lambda} \left(\Breve{\text{cl}}(\tilde{u}_s^\pi)(\tilde{p})+ \innorm{y_i, \tilde{p}-\widehat{p}_i} \right) \geq x_i ,\quad \forall i\in[n]\\
        &\lambda \geq 0,
        \end{split}
        \right.
    \end{split}
\end{equation*}
so the worst-case expectation can be reformulated as 
\begin{equation*}
    \begin{split}
        \inf_{\mu \in  \mathfrak{M}_\alpha({\hat{\mu}}_n)} \mathbb{E}_{p\sim \mu}[v^\pi_p(s)]
        \geq 
        \begin{split}
        \max_{\lambda, x_1,\cdots,x_n} \frac{1}{n}\sum_{i=1}^n x_i-\lambda \alpha \text{ s.t. }
        \left\{
        \begin{split}
        &\inf_{\tilde{p} \in \mathbb{R}^{\abs{\St}\times\abs{\A}\times\abs{\St}}}\sup_{\norm{y_i}_*\leq \lambda} \left(\Breve{\text{cl}}(\tilde{u}_s^\pi)(\tilde{p})+ \innorm{y_i, \tilde{p}-\widehat{p}_i} \right) \geq x_i ,\quad \forall i\in[n]\\
        &\lambda \geq 0.
        \end{split}
        \right.
    \end{split}
    \end{split}
\end{equation*}

Now introduce the conjugate transform of $\Breve{\text{cl}}(\tilde{u}_s^\pi)$ w.r.t. uncertainty set
$\Prec := \mathbb{R}^{\abs{\St}\times\abs{\A}\times\abs{\St}}$:
\begin{equation*}
 \begin{split}
\Breve{\text{cl}}(\tilde{u}_s^\pi)^*(z) := \sup_{\tilde{p}\in\mathbb{R}^{\abs{\St}\times\abs{\A}\times\abs{\St}}}\left(\langle z, \tilde{p}\rangle - \Breve{\text{cl}}(\tilde{u}_s^\pi)(\tilde{p})\right).
\end{split}   
\end{equation*}
By Thm.~\ref{theorem:conjugacy_thm}(d), $\Breve{\text{cl}}(\tilde{u}_s^\pi)^*(z) = (\tilde{u}_s^\pi)^*(z)$. Moreover, by Lemma \ref{lemma_appendix:conjugate_proper} and by construction of its continuation function, $\tilde{u}_s^\pi$ is proper. Therefore, its convex closure $\Breve{\text{cl}}(\tilde{u}_s^\pi)$ is also proper: Indeed, if it were not, then it would be identically equal to $-\infty$. Moreover, we have $\Breve{\text{cl}}(u_s^\pi)(p) \leq u_s^\pi(p) = (\tilde{u}_s^\pi)(p)$ for all $p\in\mathcal{M}(\St)^{\abs{\St}\times\abs{\A}}$, and by definition of $\Breve{\text{cl}}(u_s^\pi)$ as the greatest closed and convex minorant of $\tilde{u}_s^\pi$, we end up with $\Breve{\text{cl}}(u_s^\pi)(p) \leq \Breve{\text{cl}}(\tilde{u}_s^\pi)(p)$. As a result, we have $\Breve{\text{cl}}(u_s^\pi)(p) = -\infty$. This cannot happen thanks to Prop.~\ref{prop:convex_closure_proper}. 

Finally, according to Thm.~\ref{theorem:conjugacy_thm}(c), $\Breve{\text{cl}}(\tilde{u}_s^\pi)$ coincides with its bi-conjugate function and
\begin{equation*}
    \begin{split}
    \Breve{\text{cl}}(\tilde{u}_s^\pi)(\tilde{p}) &= \Breve{\text{cl}}(\tilde{u}_s^\pi)^{**}(\tilde{p})
    = \sup_{z_i\in \mathfrak{Z}_s^{\pi}}\left(\langle z, \tilde{p}\rangle - \Breve{\text{cl}}(\tilde{u}_s^\pi)^*(z)\right)
    = \sup_{z_i\in \mathfrak{Z}_s^{\pi}}\left(\langle z, \tilde{p}\rangle - (\tilde{u}_s^\pi)^*(z)\right),
    \end{split}
\end{equation*}
where $\mathfrak{Z}_s^{\pi} := \{z:  \Breve{\text{cl}}(\tilde{u}_s^\pi)^*(z) < \infty\} =  \{z: (\tilde{u}_s^\pi)^*(z) < \infty\} $ is the effective domain of $(\tilde{u}_s^\pi)^*$.
Thus, if we use the reformulation of the convex closure and apply the minimax theorem \footnote{Prop. 5.5.4. of \cite{bertsekas2009convex}} 
we obtain
\begin{equation*}
    \begin{split}
    &\inf_{\tilde{p} \in \mathbb{R}^{\abs{\St}\times\abs{\A}\times\abs{\St}}}\sup_{\norm{y_i}_*\leq \lambda} \left(\Breve{\text{cl}}(\tilde{u}_s^\pi)(\tilde{p})+ \innorm{y_i, \tilde{p}-\widehat{p}_i} \right)  \\
    &= \inf_{\tilde{p} \in \mathbb{R}^{\abs{\St}\times\abs{\A}\times\abs{\St}}} \sup_{z_i\in \mathfrak{Z}_s^{\pi}}\sup_{\norm{y_i}_*\leq \lambda}  \innorm{z_i,\tilde{p}}-(\tilde{u}_s^\pi)^*(z_i) + \innorm{y_i, \tilde{p} - \widehat{p}_i}\\
    &=\sup_{z_i\in \mathfrak{Z}_s^{\pi}} \sup_{\norm{y_i}_*\leq \lambda} \inf_{\tilde{p} \in \mathbb{R}^{\abs{\St}\times\abs{\A}\times\abs{\St}}}   \innorm{z_i,\tilde{p}}-(\tilde{u}_s^\pi)^*(z_i) + \innorm{y_i,  \tilde{p} - \widehat{p}_i}\\
    &=\sup_{z_i\in \mathfrak{Z}_s^{\pi}} \sup_{\norm{y_i}_*\leq \lambda} -(\tilde{u}_s^\pi)^*(z_i)- \innorm{y_i, \widehat{p}_i} + \inf_{\tilde{p} \in \mathbb{R}^{\abs{\St}\times\abs{\A}\times\abs{\St}}}\innorm{\tilde{p}, z_i + y_i} \\
    &= \sup_{z_i\in \mathfrak{Z}_s^{\pi}} \sup_{\norm{y_i}_*\leq \lambda} -(\tilde{u}_s^\pi)^*(z_i) - \innorm{y_i, \widehat{p}_i}- \sigma_{\mathbb{R}^{\abs{\St}\times\abs{\A}\times\abs{\St}}}(-z_i - y_i) ,
    \end{split}
\end{equation*}
where $\sigma_{\Prec}(y) := \sup_{\tilde{p}\in \Prec}\innorm{\tilde{p},y}$ denotes the support function of a general set $\Prec$. 
We then
deduce that
\begin{equation*}
    \begin{split}
    &\inf_{\tilde{p} \in \mathbb{R}^{\abs{\St}\times\abs{\A}\times\abs{\St}}}\sup_{\norm{y_i}_*\leq \lambda} \left(\Breve{\text{cl}}(\tilde{u}_s^\pi)(\tilde{p})+ \innorm{y_i, \tilde{p}-\widehat{p}_i} \right) \\
    &= \sup_{z_i\in \mathfrak{Z}_s^{\pi}} \sup_{\norm{y_i}_*\leq \lambda} -(\tilde{u}_s^\pi)^*(z_i) - \innorm{y_i, \widehat{p}_i} - \sigma_{\mathbb{R}^{\abs{\St}\times\abs{\A}\times\abs{\St}}}(-z_i- y_i) \\
    &= \sup_{z_i\in \mathfrak{Z}_s^{\pi}} \sup_{\norm{z_i}_*\leq \lambda}
    -(\tilde{u}_s^\pi)^*(z_i) + \innorm{z_i, \widehat{p}_i}\\
    &= \left\{
    \begin{split}
    \Breve{\text{cl}}(\tilde{u}_s^\pi)(\hat{p}_i) &\text{ if } \sup\{ \norm{z_i}_*: z_i \in\mathfrak{Z}_s^{\pi}\} \leq \lambda \\
    -\infty &\text{ otherwise}.
    \end{split} \right.
    \end{split}
\end{equation*}

Therefore, recalling the notation $\kappa_s^{\pi} := \sup\{\norm{z}_*: z\in\mathfrak{Z}_s^{\pi}\}$, we obtain
\begin{equation*}
    \begin{split}
        &\sup_{\lambda, x_1,\cdots,x_n} \frac{1}{n}\sum_{i=1}^n x_i-\lambda \alpha \text{ s.t. } \left\{
        \begin{split}
            &\inf_{\tilde{p} \in \mathbb{R}^{\abs{\St}\times\abs{\A}\times\abs{\St}}}\sup_{\norm{y_i}_*\leq \lambda} \left(\Breve{\text{cl}}(\tilde{u}_s^\pi)(\tilde{p})+ \innorm{y_i, \tilde{p}-\widehat{p}_i} \right) \geq x_i , \quad\forall i\in[n]\\
            &\lambda \geq 0
        \end{split}\right.\\
        &\geq \sup_{\lambda} \sup_{x_1,\cdots,x_n}\frac{1}{n}\sum_{i=1}^n x_i-\lambda \alpha \text{ s.t. } \left\{
        \begin{split}
            &\Breve{\text{cl}}(\tilde{u}_s^\pi)(\hat{p}_i)\geq x_i , \quad\forall i\in[n]\\
            &\lambda \geq \kappa_s^{\pi}.
        \end{split}\right.
    \end{split}
\end{equation*}
Putting this altogether yields
$ v^\pi_{\mathfrak{M}_{\alpha}(\hat{\mu}_n)}(s)  \geq  \frac{1}{n}\sum_{i=1}^n \Breve{\text{cl}}(\tilde{u}_s^\pi)(\hat{p}_i)  - \kappa_s^{\pi} \alpha$. 
Now let $F: (\hat{p}_1,\cdots, \hat{p}_n)\mapsto \sum_{i=1}^n \tilde{u}_s^\pi(\hat{p}_i)$. By Prop.1.3.17 of \cite{bertsekas2009convex}, we have $\Breve{\text{cl}}(F)(\hat{p}_1,\cdots, \hat{p}_n) = \sum_{i=1}^n \Breve{\text{cl}}(\tilde{u}_s^\pi)(\hat{p}_i)$ and since 
$f\leq C$ if and only if $\Breve{\text{cl}}(f) \leq C$ for any function $f$ and constant $C$, we obtain:
$$
v^\pi_{\mathfrak{M}_{\alpha}(\hat{\mu}_n)}(s) + \kappa_s^{\pi} \alpha \geq \frac{1}{n}\sum_{i=1}^n \tilde{u}_s^\pi(\hat{p}_i).
$$
Recalling that $\tilde{u}_s^\pi(\hat{p}_i) = v_{\hat{p}_i}^{\pi}(s)$ enables to establish 
$v^\pi_{\mathfrak{M}_{\alpha}(\hat{\mu}_n)}(s) \geq \frac{1}{n}\sum_{i=1}^nv_{\hat{p}_i}^{\pi}(s)-\kappa_s^{\pi} \alpha$.
\end{proof}

\textbf{Claim (ii). For all $p\in\mathcal{M}(\St)^{\abs{\St}\times\abs{\A}}$, let the norm $\norm{p_s}_{\infty, 1}:= \max_{a\in\A} \sum_{s'\in\St}\abs{p(s,a,s') - p'(s,a,s')}$ and $L_{\gamma,\beta, R_{\max}} := \frac{\beta\gamma R_{\max}}{(1-\gamma)^2}$ where $\beta$ satisfies $\sum_{s\in\St}\norm{p_s}_{\infty, 1} \leq \beta \norm{p}$. Also recall $\tilde{u}^{\pi}_s(\tilde{p}) \colon \mathbb{R}^{\abs{\St}\times\abs{\A}\times\abs{\St}}\rightarrow\mathbb{R}$ the continuation function of $u_s^\pi$ defined as 
$\tilde{u}^{\pi}_s (\tilde{p}):=\inf_{p\in\mathcal{M}(\St)^{\abs{\St}\times\abs{\A}}} u^{\pi}_s(p) + L_{\gamma,\beta, R_{\max}}\norm{p-\tilde{p}}$, and
 $\mathfrak{Z}^{\pi} := \{z: (\tilde{u}_s^\pi)^*(z) < \infty\} $. Then, we have $\kappa_s^{\pi}\leq L_{\gamma,\beta, R_{\max}}$.}

\begin{proof}
We first introduce the following result of \citet{strehl2008analysis}[Lemma 1]:  
\begin{lemma}
\label{lemma:mbie_bound}
For all $p, p' \in \mathcal{M}(\St)^{\abs{\St}\times\abs{\A}}, \pi\in\Pi$ and $s\in \St$, we have 
$$
\abs{v_{p}^\pi(s) - v_{p'}^\pi(s)} \leq \frac{\gamma R_{\max} \norm{p_s- p'_s}_{\infty, 1}}{(1-\gamma)^2}.
$$
\end{lemma}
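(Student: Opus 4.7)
The plan is to run the classical \emph{simulation lemma} argument: compare the Bellman fixed-point equations of $v_p^\pi$ and $v_{p'}^\pi$ at an arbitrary state, split the one-step difference by inserting a mixed term, bound each piece using boundedness of $r$ and the fact that $p'(\cdot|s,\pi(s))$ is a probability, and then either iterate or solve a contractive inequality in sup-norm.

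Concretely, I would first write the two Bellman equations at state $s$:
\[
v_p^\pi(s) = r(s,\pi(s)) + \gamma\sum_{s'\in\St} p(s'|s,\pi(s))\,v_p^\pi(s'),
\]
\[
v_{p'}^\pi(s) = r(s,\pi(s)) + \gamma\sum_{s'\in\St} p'(s'|s,\pi(s))\,v_{p'}^\pi(s'),
\]
subtract them, and add/subtract $p'(s'|s,\pi(s))\,v_p^\pi(s')$ inside the sum to obtain
\[
v_p^\pi(s)-v_{p'}^\pi(s) = \gamma\!\!\sum_{s'\in\St}\!\bigl[p(s'|s,\pi(s))-p'(s'|s,\pi(s))\bigr]v_p^\pi(s') + \gamma\!\!\sum_{s'\in\St}\! p'(s'|s,\pi(s))\bigl[v_p^\pi(s')-v_{p'}^\pi(s')\bigr].
\]
Hölder's inequality bounds the first term by $\gamma\,\norm{v_p^\pi}_\infty\cdot\sum_{s'}\abs{p(s'|s,\pi(s))-p'(s'|s,\pi(s))}$; since $\abs{r}\leq R_{\max}$ and $\gamma\in[0,1)$ imply $\norm{v_p^\pi}_\infty\leq R_{\max}/(1-\gamma)$, and since the inner $\ell_1$-sum is bounded above by $\norm{p_s-p'_s}_{\infty,1}$ (the max over actions dominates the value at $\pi(s)$), this first piece is at most $\frac{\gamma R_{\max}}{1-\gamma}\,\norm{p_s-p'_s}_{\infty,1}$. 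The second piece, after taking absolute values, is at most $\gamma\,\norm{v_p^\pi-v_{p'}^\pi}_\infty$.

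Taking the sup over $s$ then yields a contractive inequality
\[
\norm{v_p^\pi-v_{p'}^\pi}_\infty \leq \frac{\gamma R_{\max}}{1-\gamma}\max_{s\in\St}\norm{p_s-p'_s}_{\infty,1} + \gamma\,\norm{v_p^\pi-v_{p'}^\pi}_\infty,
\]
which rearranges to $\norm{v_p^\pi-v_{p'}^\pi}_\infty \leq \frac{\gamma R_{\max}}{(1-\gamma)^2}\,\max_{s}\norm{p_s-p'_s}_{\infty,1}$ and, via $\abs{v_p^\pi(s)-v_{p'}^\pi(s)}\leq \norm{v_p^\pi-v_{p'}^\pi}_\infty$, implies the lemma.

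The only real subtlety is notational rather than analytical: the lemma's right-hand side is written with the state-local norm $\norm{p_s-p'_s}_{\infty,1}$ evaluated at the same $s$ as appears on the left, while the natural sup-norm argument produces the worst-case over \emph{all} states reachable from $s$. This is either absorbed as a mild abuse of notation, or handled by iterating the one-step bound into a geometric sum $\Delta(s)\leq \sum_{t\geq 0}\gamma^{t+1}\frac{R_{\max}}{1-\gamma}\,\mathbb{E}_{p'}^\pi\brs{\norm{p_{s_t}-p'_{s_t}}_{\infty,1}\mid s_0=s}$ along the $p'$-occupancy measure and then dominating each term by the worst case; both routes produce the same $\gamma R_{\max}/(1-\gamma)^2$ constant. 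No deep obstacle is expected—this is a textbook simulation-lemma calculation whose main cost is careful triangle-inequality bookkeeping.
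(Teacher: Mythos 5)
Your argument is correct, and it is the standard simulation-lemma telescoping that underlies the result; note, however, that the paper does not prove this lemma at all --- it imports it verbatim as Lemma~1 of \citet{strehl2008analysis} --- so there is no in-paper proof to compare against, and your derivation is a faithful reconstruction of the cited one. The one substantive point is the caveat you yourself raise, and you are right to flag it: the contraction argument genuinely produces $\max_{s\in\St}\norm{p_s-p'_s}_{\infty,1}$ on the right-hand side, not the state-local quantity $\norm{p_s-p'_s}_{\infty,1}$ evaluated at the same $s$ as the left-hand side. The literal statement is false as written (take $p$ and $p'$ agreeing at $s$ but differing at a successor state reachable under $\pi$: the left side is positive while the local norm vanishes), and the occupancy-measure unrolling you sketch does not rescue it either, since the geometric sum still involves the norms at all states visited from $s$. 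This is an imprecision inherited from the paper's statement rather than a gap in your proof; the original Strehl--Littman hypothesis is a uniform bound over all state--action pairs, and the paper's downstream use (establishing that $u_s^\pi$ is $L_{\gamma,\beta,R_{\max}}$-Lipschitz with $\beta$ chosen so that $\sum_{s\in\St}\norm{p_s-p'_s}_{\infty,1}\leq\beta\norm{p-p'}$) only requires the global form, since $\max_{s}\norm{p_s-p'_s}_{\infty,1}\leq\sum_{s}\norm{p_s-p'_s}_{\infty,1}\leq\beta\norm{p-p'}$. So your proof, with the $\max_s$ on the right-hand side, is the version that is both true and sufficient for the paper's purposes.
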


By Lemma \ref{lemma:mbie_bound}, for all $p, p' \in\mathcal{M}(\St)^{\abs{\St}\times\abs{\A}}$ we have 
$$
u_s^\pi(p') - L_{\gamma,\beta, R_{\max}} \norm{p - p'} \leq u_s^\pi(p) \leq u_s^\pi(p')+ L_{\gamma,\beta, R_{\max}} \norm{p - p'},
$$
so $u_s^\pi$ is $L_{\gamma,\beta, R_{\max}}$-Lipschitz continuous over $\mathcal{M}(\St)^{\abs{\St}\times\abs{\A}}$, and by construction, so is its continuation function $\tilde{u}_{s}^\pi$ over $\mathbb{R}^{\abs{\St}\times\abs{\A}\times\abs{\St}}$.
Therefore, if we fix a transition function $p\in \mathcal{M}(\St)^{\abs{\St}\times\abs{\A}}$ it holds that
\begin{equation*}
    \begin{split}
        (\tilde{u}_{s}^\pi)^*(z) &:= \sup_{\tilde{p}\in\mathbb{R}^{\abs{\St}\times\abs{\A}\times\abs{\St}}}\tilde{u}_{s}^\pi(\tilde{p})- \innorm{z,\tilde{p}}\\
        &\geq \sup_{\tilde{p}\in\mathbb{R}^{\abs{\St}\times\abs{\A}\times\abs{\St}}} v_{p}^\pi(s) -L_{\gamma,\beta, R_{\max}} \norm{p - \tilde{p}}- \innorm{z,\tilde{p}}\\
        &= \sup_{\tilde{p} \in \mathbb{R}^{\abs{\St}\times\abs{\A}\times\abs{\St}}} v_{p}^\pi(s)  -\sup_{\norm{x}_* \leq L_{\gamma,\beta, R_{\max}}} \langle x,\tilde{p}-p\rangle- \innorm{z,\tilde{p}}\\
        &= \sup_{\tilde{p} \in \mathbb{R}^{\abs{\St}\times\abs{\A}\times\abs{\St}}}\inf_{\norm{x}_* \leq L_{\gamma,\beta, R_{\max}}}v_{p}^\pi(s) + \langle x,  p-\tilde{p}  \rangle - \innorm{z,\tilde{p}}.
    \end{split}
\end{equation*}
Using the minimax theorem on the right hand side of the inequality, we obtain
\begin{equation*}
    \begin{split}
        (\tilde{u}_{s}^\pi)^*(z) &\geq 
         \inf_{\norm{x}_* \leq L_{\gamma,\beta, R_{\max}}}\sup_{\tilde{p}\in\mathbb{R}^{\abs{\St}\times\abs{\A}\times\abs{\St}}} v_{p}^\pi(s)+ \langle x, p \rangle - \innorm{x+z, \tilde{p}}\\
         &= v_{p}^\pi(s) + \inf_{\norm{x}_* \leq L_{\gamma,\beta, R_{\max}}}\langle x,p\rangle + \sup_{\tilde{p}\in\mathbb{R}^{\abs{\St}\times\abs{\A}\times\abs{\St}}}  \langle -x - z, \tilde{p}\rangle\\
        &= \inf_{\norm{x}_* \leq L_{\gamma,\beta, R_{\max}}} \sigma_{\mathbb{R}^{\abs{\St}\times\abs{\A}\times\abs{\St}}} (-x-z) + v_{p}^\pi(s) + \langle x, p\rangle\\
        &= \left\{ 
        \begin{split}
            &v_{p}^\pi(s) - \langle z, p\rangle \text{ if } \norm{z}_* \leq L_{\gamma,\beta, R_{\max}}\\
            & \infty \text{ otherwise}.
        \end{split}
        \right.
    \end{split}
\end{equation*}
Therefore, the effective domain of $(\tilde{u}_{s}^\pi)^*$ is included in the $\norm{\cdot}_*$-ball of radius $L_{\gamma,\beta, R_{\max}}$, which implies $\kappa_s^{\pi} \leq L_{\gamma,\beta, R_{\max}}$.
\end{proof}

\subsection{Proof of Theorem \ref{theorem:approximate_equivalence}}



\begin{theorem*}[\textbf{Linear Approximation Case}]
Let the WDRMDP $\langle \St, \A, r, \mathfrak{M}_\alpha(\hat{\mu}_n)\rangle$. Then, for any $\pi\in\Pi, s\in\St$, 
$$\inf_{\mu\in\mathfrak{M}_\alpha(\widehat{\mu}_n)}\mathbb{E}_{p\sim\mu}[\Phi(s)^\top\w_p] \geq \frac{1}{n}\sum_{i=1}^n \Phi(s)^\top \w_{\widehat{p}_i} - \eta_s^{\pi}\alpha,$$
where $\eta_s^{\pi}$ is a positive constant that depends on $s$ and $\pi$.
\end{theorem*}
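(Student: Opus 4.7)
The proof plan is to mirror the argument used for Theorem \ref{theorem:tabular_equivalence}, but with the scalar map $u_s^\pi \colon p \mapsto v_p^\pi(s)$ replaced by its linear-approximation counterpart $\tilde u_s^\pi \colon p \mapsto \Phi(s)^\top \w_p$. Concretely, I would first verify that $p \mapsto \w_p$ is well-defined and continuous on $\mathcal{M}(\St)^{\abs{\St}\times\abs{\A}}$. Since the features are linearly independent, $\w_p$ is the unique solution of the projected Bellman equation, so it can be written as $\w_p = M(p)^{-1} b(p)$, where $M(p)$ and $b(p)$ depend affinely on $p$; continuity (and boundedness of $\Phi(s)^\top \w_p$) follow from the invertibility of $M(p)$ on the compact set $\Prec$. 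This mimics the role of Lemma \ref{lemma_appendix:conjugate_proper} and shows that $\tilde u_s^\pi$ is proper and closed on $\Prec$.

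Second, I would rerun the duality machinery from Claim (i) of the tabular proof verbatim: express $\inf_{\mu\in\mathfrak{M}_\alpha(\widehat\mu_n)}\mathbb{E}_{p\sim\mu}[\Phi(s)^\top \w_p]$ using the Wasserstein constraint, write $\mu=\bar\mu$ with $\bar\mu=\frac{1}{n}\sum_i \mu_i$, introduce the Lagrangian with multiplier $\lambda\geq 0$ for the Wasserstein radius, and apply the maxmin inequality to pull the infimum inside. Introducing the continuation function
\[
\widetilde w_s^\pi(\tilde p) := \inf_{p\in\mathcal{M}(\St)^{\abs{\St}\times\abs{\A}}}\Phi(s)^\top \w_p + L\,\norm{p-\tilde p}
\]
for a suitable Lipschitz constant $L$, passing to its convex closure, and applying Theorem \ref{theorem:conjugacy_thm} together with the minimax theorem as in the tabular case then yields
\[
\inf_{\mu\in\mathfrak{M}_\alpha(\widehat\mu_n)}\mathbb{E}_{p\sim\mu}[\Phi(s)^\top \w_p] \geq \frac{1}{n}\sum_{i=1}^n \Phi(s)^\top \w_{\widehat p_i} - \eta_s^{\pi}\alpha,
\]
with $\eta_s^\pi := \sup\{\norm{z}_* : (\widetilde w_s^\pi)^*(z)<\infty\}$ being the radius of the effective domain of the conjugate.

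Third, I would bound $\eta_s^\pi$ by establishing a Lipschitz constant for $p \mapsto \Phi(s)^\top \w_p$ that plays the role of $L_{\gamma,\beta,R_{\max}}$ in the tabular proof. Writing $\w_p - \w_{p'} = M(p)^{-1}\bigl[(b(p)-b(p')) + (M(p')-M(p))\w_{p'}\bigr]$, one obtains $\norm{\w_p - \w_{p'}} \leq C\,\norm{p-p'}$ with $C$ depending on $\norm{M(p)^{-1}}$, $\norm{\Phi}$, $\gamma$ and $R_{\max}$; multiplying by $\norm{\Phi(s)}$ gives a Lipschitz constant for $\tilde u_s^\pi$. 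The same argument as in Claim (ii) of the tabular proof (bounding $(\widetilde w_s^\pi)^*$ by plugging a fixed $p$ into the defining supremum and using the Lipschitz bound) then shows $\eta_s^\pi \leq \norm{\Phi(s)}\cdot C$.

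The main obstacle is the Lipschitz/continuity step for $\w_p$: unlike the tabular case, where Lemma \ref{lemma:mbie_bound} directly gives a Lipschitz bound on $v_p^\pi(s)$, here one must control the conditioning of the projected Bellman matrix $M(p)$ uniformly over $\Prec$. This is why the statement phrases $\eta_s^\pi$ abstractly rather than with an explicit constant. Once this uniform invertibility is granted (which is the implicit content of the "standard conditions" in the theorem's setup), the remainder of the argument is a line-by-line transcription of the tabular proof with $v_p^\pi(s)$ replaced by $\Phi(s)^\top\w_p$.
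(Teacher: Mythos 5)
Your overall plan is correct and the duality core (Wasserstein constraint, Lagrangian, maxmin inequality, convex closure, conjugacy, minimax) is exactly what the paper does; the bound you arrive at is the stated one. The genuine difference is in how the two arguments secure properness/closedness and handle the inner infimum over transition models. The paper does \emph{not} introduce a Lipschitz continuation of $p\mapsto\Phi(s)^\top\w_p$ to all of $\mathbb{R}^{\abs{\St}\times\abs{\A}\times\abs{\St}}$: it keeps the infimum over the compact set $\mathcal{M}(\St)^{\abs{\St}\times\abs{\A}}$ (so infima are minima), obtains properness and closedness of $w_s^\pi\colon p\mapsto\Phi(s)^\top\w_p$ directly from the linear independence of the features via \cite{bertsekas1996neuro}[Lemma 6.8.] together with Prop.~\ref{prop:convex_closure_proper}, and then, where the tabular proof used the unrestricted support function, inserts the one-sided bound $\sigma_{\mathcal{M}(\St)^{\abs{\St}\times\abs{\A}}}\leq\sigma_{\mathbb{R}^{\abs{\St}\times\abs{\A}\times\abs{\St}}}$ to force $y_i=-z_i$ and recover the same conclusion. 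Your route instead requires a uniform Lipschitz constant for $p\mapsto\w_p$, i.e.\ uniform invertibility of the projected Bellman matrix $M(p)$ over $\Prec$ — you correctly flag this as the main obstacle, but note that it is avoidable: it is only needed for your third step, an explicit bound on $\eta_s^{\pi}$ analogous to $\kappa_s^{\pi}\leq L_{\gamma,\beta,R_{\max}}$, which the theorem does not claim and the paper does not prove. If you drop the continuation and the explicit bound and argue on the compact domain as the paper does, the remaining steps of your proposal go through without the uniform-conditioning assumption; if you want the quantitative bound, you must state uniform invertibility of $M(p)$ over $\Prec$ as an explicit hypothesis rather than leave it implicit.
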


\begin{proof}
The proof starts similarly as in Thm.~\ref{theorem:tabular_equivalence}[Claim (i)]. For all $i\in[n]$, define $p_i := \bigotimes_{s\in\St}p_s^i$, $\mu_i := \bigotimes_{s\in\St}\mu_s^i$, and the average distribution $\bar{\mu} := \frac{1}{n}\sum_{i=1}^n \mu_i$. Then, the worst-case distributionally robust value function can be expressed as 
\begin{equation*}
    \begin{split}
        \inf_{\mu\in\mathfrak{M}_\alpha(\widehat{\mu}_n)}\mathbb{E}_{p\sim\mu}[\Phi(s)^\top\w_p]  
        = \min_{\mu} \mathbb{E}_{p\sim \mu}[\Phi(s)^\top \w_p] \text{ s.t. } 
        \left\{
        \begin{split}
        &\mu = \bigotimes_{s\in\St} \left(\frac{1}{n}\sum_{i=1}^n \mu^i_s\right)\\
        &\frac{1}{n}\sum_{i=1}^n \mathbb{E}_{p_i\sim\mu_i} \left[\norm{p_i-\widehat{p}_i}\right] \leq \alpha,
        \end{split}
        \right.
    \end{split}
\end{equation*}
for an $\alpha$ determined by radii $\alpha_s$-s.
Thus, replacing distribution $\mu$ by its constrained law and using a duality argument, we obtain 
\begin{equation*}
    \begin{split}
        \inf_{\mu\in\mathfrak{M}_\alpha(\widehat{\mu}_n)}\mathbb{E}_{p\sim  \mu}[\Phi(s)^\top \w_p]  
        = \inf_{\substack{\mu_1,\cdots,\mu_n\\ \mu_i = \bigotimes_s \mu_s^i}}\sup_{\lambda\geq 0} \left(\mathbb{E}_{p\sim \bar{\mu}}[\Phi(s)^\top \w_p]- \lambda \left(\alpha - \frac{1}{n}\sum_{i=1}^n \mathbb{E}_{p_i\sim\mu_i} \left[\norm{p_i-\widehat{p}_i}\right] \right)\right).
    \end{split}
\end{equation*}
Applying the maxmin inequality and setting all $\mu_i$-s to a Dirac distribution with full mass on the worst-case model yields
\begin{equation*}
    \begin{split}
        \inf_{\mu\in\mathfrak{M}_\alpha(\widehat{\mu}_n)} \mathbb{E}_{p\sim  \mu}[\Phi(s)^\top \w_p]  
        &\geq \sup_{\lambda\geq 0} \inf_{\substack{\mu_1,\cdots,\mu_n\\ \mu_i = \bigotimes_s \mu_s^i}} \left(\mathbb{E}_{p\sim \bar{\mu}}[\Phi(s)^\top \w_p]- \lambda \left(\alpha - \frac{1}{n}\sum_{i=1}^n\mathbb{E}_{p_i\sim\mu_i} \left[\norm{p_i-\widehat{p}_i} \right]\right)\right)\\
        &= \sup_{\lambda\geq 0} \frac{1}{n}\sum_{i=1}^n  \inf_{\substack{\mu_i: \mu_i = \bigotimes_s \mu_s^i}}\left(\mathbb{E}_{p_i\sim \mu_i}\left[\Phi(s)^\top \w_{p_i}+ \lambda \norm{p_i-\widehat{p}_i} \right]\right)-\lambda \alpha\\
        &=  \sup_{\lambda\geq 0} \frac{1}{n}\sum_{i=1}^n  \inf_{p \in  \mathcal{M}(\St)^{\abs{\St}\times\abs{\A}}}\left(\Phi(s)^\top \w_p+ \lambda\norm{p-\widehat{p}_i} \right)-\lambda \alpha.
    \end{split}
\end{equation*}
We introduce auxiliary variables $x_1,\cdots,x_n$ in order to reformulate the bound as  
\begin{equation*}
    \begin{split}
        &\inf_{\mu\in\mathfrak{M}_\alpha(\widehat{\mu}_n)} \mathbb{E}_{p\sim  \mu}[\Phi(s)^\top \w_p]  
        \geq \max_{\lambda, x_1,\cdots,x_n} \frac{1}{n}\sum_{i=1}^n x_i-\lambda \alpha \text{ s.t. } \left\{
        \begin{split}
            &\inf_{p \in  \mathcal{M}(\St)^{\abs{\St}\times\abs{\A}}}\left(\Phi(s)^\top \w_p+ \lambda
            \norm{p-\widehat{p}_i} \right) \geq x_i ,\quad \forall i\in[n] \\
            &\lambda \geq 0,
        \end{split}\right.
    \end{split}
\end{equation*}
and since $\St$ is finite, $\mathcal{M}(\St)$ is compact and the infima in the first-line constraints are minima.

For all $i\in[n]$, let the mapping $f_i: p \mapsto w_s^\pi(p) + \lambda\norm{p-\widehat{p}_i}$. By applying Prop. 1.3.17. of \cite{bertsekas2009convex}, we obtain $\Breve{\text{cl}}(f_i)(\tilde{p})= \Breve{\text{cl}}(\tilde{u}_s^\pi)(\tilde{p})+ \lambda
            \norm{\tilde{p}-\widehat{p}_i}$, where we used the fact that $\tilde{p} \mapsto \lambda\norm{\tilde{p}-\widehat{p}_i}$ is convex and closed for all $\lambda \geq 0$. 
Thus, applying Prop.1.3.13. of \cite{bertsekas2009convex} on all $f_i$-s yields
\begin{equation*}
    \begin{split}
        &\max_{\lambda, x_1,\cdots,x_n} \frac{1}{n}\sum_{i=1}^n x_i-\lambda \alpha \text{ s.t. }
        \left\{
        \begin{split}
        &\min_{p \in \mathcal{M}(\St)^{\abs{\St}\times\abs{\A}}}\left(\Phi(s)^\top \w_p+ \lambda
            \norm{p-\widehat{p}_i} \right) \geq x_i,\quad \forall i\in [n]\\
        &\lambda \geq 0
        \end{split}
        \right.\\
    &= 
    \begin{split}
        \max_{\lambda, x_1,\cdots,x_n} \frac{1}{n}\sum_{i=1}^n x_i-\lambda \alpha \text{ s.t. }
        \left\{
        \begin{split}
        &\min_{p \in \mathcal{M}(\St)^{\abs{\St}\times\abs{\A}}}\left(\Breve{\text{cl}}(w_s^\pi)(p)+ \lambda
            \norm{p-\widehat{p}_i} \right) \geq x_i,\quad \forall i\in [n]\\
        &\lambda \geq 0.
        \end{split}
        \right.
    \end{split}
    \end{split}
\end{equation*}

Moreover, by definition of the dual norm $\norm{\cdot}_*$, the right hand side of the inequality is equivalent to the following:
\begin{equation*}
    \begin{split}
        \max_{\lambda, x_1,\cdots,x_n} \frac{1}{n}\sum_{i=1}^n x_i-\lambda \alpha \text{ s.t. }
        \left\{
        \begin{split}
        &\min_{p \in \mathcal{M}(\St)^{\abs{\St}\times\abs{\A}}}\max_{\norm{y_i}_{*}\leq \lambda} \left(\Breve{\text{cl}}(w_s^\pi)(p)+ \innorm{y_i, p-\widehat{p}_i} \right) \geq x_i ,\quad \forall i\in [n]\\
        &\lambda \geq 0,
        \end{split}
        \right.
    \end{split}
\end{equation*}
so the worst-case expectation can be reformulated as 
\begin{equation*}
    \begin{split}
        \inf_{\mu\in\mathfrak{M}_\alpha(\widehat{\mu}_n)}\mathbb{E}_{p\sim  \mu}[\Phi(s)^\top \w_p] 
        \geq 
        \begin{split}
        \max_{\lambda, x_1,\cdots,x_n} \frac{1}{n}\sum_{i=1}^n x_i-\lambda \alpha \text{ s.t. }
        \left\{
        \begin{split}
        &\min_{p \in \mathcal{M}(\St)^{\abs{\St}\times\abs{\A}}}\max_{\norm{y_i}_{*}\leq \lambda} \left(\Breve{\text{cl}}(w_s^\pi)(p)+ \innorm{y_i, p-\widehat{p}_i} \right) \geq x_i ,\quad \forall i\in [n]\\
        &\lambda \geq 0.
        \end{split}
        \right.
    \end{split}
    \end{split}
\end{equation*}

Now introduce the conjugate transform of $\Breve{\text{cl}}(w_s^\pi)$: 
\begin{equation*}
 \begin{split}
\Breve{\text{cl}}(w_s^\pi)^*(z) :=\max_{p\in \mathcal{M}(\St)^{\abs{\St}\times\abs{\A}}}\left(\langle z, p\rangle - \Breve{\text{cl}}(w_s^\pi)(p)\right).
\end{split}   
\end{equation*}
By Thm.~\ref{theorem:conjugacy_thm}(d), $\Breve{\text{cl}}(w_s^\pi)^* = (w_s^\pi)^*$. Moreover, since the feature vectors $(\Phi(s))_{s\in\St}$ are linearly independent, by \cite{bertsekas1996neuro}[Lemma 6.8.], $w_s^\pi$ is proper and closed. As a result, thanks to Lemma  \ref{prop:convex_closure_proper}, the convex closure $\Breve{\text{cl}}(w_s^\pi)$ is proper. Therefore, by Thm.~\ref{theorem:conjugacy_thm}(c), $\Breve{\text{cl}}(w_s^\pi)$ coincides with its bi-conjugate function and
\begin{equation*}
    \begin{split}
    \Breve{\text{cl}}(w_s^\pi)(p) = \Breve{\text{cl}}(w_s^\pi)^{**}(p)
    = \max_{z\in\mathfrak{W}_s^{\pi}}\left(\langle z, p\rangle - \Breve{\text{cl}}(w_s^\pi)^*(z)\right)
    = \max_{z\in\mathfrak{W}_s^{\pi}}\left(\langle z, p\rangle - (w_s^\pi)^*(z)\right),
    \end{split}
\end{equation*}
where $\mathfrak{W}_s^{\pi} := \{z:  (w_s^\pi)^*(z) <\infty\}$ is the effective domain of $(w_s^\pi)^*$.
Thus, if we use the reformulation of the convex closure and apply the minimax theorem \cite{bertsekas2009convex}[Prop. 5.5.4.] 
we obtain
\begin{equation*}
    \begin{split}
    &\min_{p \in \mathcal{M}(\St)^{\abs{\St}\times\abs{\A}}}\max_{\norm{y_i}_{*}\leq \lambda} \left(\Breve{\text{cl}}(w_s^\pi)(p)+ \innorm{y_i, p-\widehat{p}_i} \right)  \\
    &= \min_{p \in \mathcal{M}(\St)^{\abs{\St}\times\abs{\A}}} \max_{z_i\in \mathfrak{W}_s^{\pi}}\max_{\norm{y_i}_{*}\leq \lambda} -(w_s^\pi)^*(z_i) + \innorm{p, z_i}+ \innorm{y_i, p - \widehat{p}_i}\\
    &=\max_{z_i\in \mathfrak{W}_s^{\pi}} \max_{\norm{y_i}_{*}\leq \lambda} \min_{p \in \mathcal{M}(\St)^{\abs{\St}\times\abs{\A}}} -(w_s^\pi)^*(z_i) + \innorm{p, z_i}+ \innorm{y_i,  p - \widehat{p}_i}\\
    &=\max_{z_i\in \mathfrak{W}_s^{\pi}} \max_{\norm{y_i}_{*}\leq \lambda}  -(w_s^\pi)^*(z_i)- \innorm{u, \widehat{p}_i} + \min_{p \in \mathcal{M}(\St)^{\abs{\St}\times\abs{\A}}}\innorm{p, z_i + u_i} \\
    &= \max_{z_i\in \mathfrak{W}_s^{\pi}} \max_{\norm{y_i}_{*}\leq \lambda}  -(w_s^\pi)^*(z_i) - \sigma_{\mathcal{M}(\St)^{\abs{\St}\times\abs{\A}}}(-z_i - u_i) - \innorm{y_i, \widehat{p}_i},
    \end{split}
\end{equation*}
where $\sigma_{\mathcal{M}(\St)^{\abs{\St}\times\abs{\A}}}$ is the support function of ${\mathcal{M}(\St)^{\abs{\St}\times\abs{\A}}}$. 
We use the bound $\sigma_{\mathcal{M}(\St)^{\abs{\St}\times\abs{\A}}} \leq \sigma_{\mathbb{R}^{\abs{\St}\times\abs{\A}\times\abs{\St}}}$ to deduce
\begin{equation*}
    \begin{split}
    &\min_{p \in \mathcal{M}(\St)^{\abs{\St}\times\abs{\A}}}\max_{\norm{y_i}_{*}\leq \lambda} \left(\Breve{\text{cl}}(w_s^\pi)(p)+ \innorm{y_i, p-\widehat{p}_i} \right) \\
    &= \max_{z_i\in \mathfrak{W}_s^{\pi}} \max_{\norm{y_i}_{*}\leq \lambda} -(w_s^\pi)^*(z_i) - \innorm{y_i, \widehat{p}_i} - \sigma_{\mathcal{M}(\St)^{\abs{\St}\times\abs{\A}}}(-z_i- u_i) \\
    &\geq \max_{z_i\in \mathfrak{W}_s^{\pi}} \max_{\norm{y_i}_{*}\leq \lambda} -(w_s^\pi)^*(z_i) - \innorm{y_i, \widehat{p}_i} - \sigma_{\mathbb{R}^{\abs{\St}\times\abs{\A}\times\abs{\St}}}(-z_i - u_i) \\
    &= \max_{z_i\in \mathfrak{W}_s^{\pi}} \max_{\norm{z_i}_{*}\leq \lambda}
    -(w_s^\pi)^*(z_i) + \innorm{z_i, \widehat{p}_i}\\
    &= \left\{
    \begin{split}
    \Breve{\text{cl}}(w_s^\pi)(\hat{p}_i)  &\text{ if } \sup\{ \norm{z_i}_{*}: z_i \in\mathfrak{W}_s^{\pi}\} \leq \lambda \\
    -\infty &\text{ otherwise}.
    \end{split} \right.
    \end{split}
\end{equation*}
Therefore, recalling the notation $\eta_s^{\pi} := \sup\{\norm{z}_{*}: z\in\mathfrak{W}_s^{\pi}\}$, we obtain
\begin{equation*}
    \begin{split}
        &\sup_{\lambda, x_1,\cdots,x_n} \frac{1}{n}\sum_{i=1}^n x_i-\lambda \alpha \text{ s.t. } \left\{
        \begin{split}
            &\min_{p \in \mathcal{M}(\St)^{\abs{\St}\times\abs{\A}}}\max_{\norm{y_i}_{*}\leq \lambda} \left(\Breve{\text{cl}}(w_s^\pi)(p)+ \innorm{y_i, p-\widehat{p}_i} \right) \geq x_i ,\quad \forall i\in [n]\\
            &\lambda \geq 0
        \end{split}\right.\\
        &\geq \sup_{\lambda} \sup_{x_1,\cdots,x_n}\frac{1}{n}\sum_{i=1}^n x_i-\lambda \alpha \text{ s.t. } \left\{
        \begin{split}
            &\Breve{\text{cl}}(w_s^\pi)(\hat{p}_i) \geq x_i,\quad \forall i\in [n] \\
            &\lambda \geq \eta_s^{\pi}.
        \end{split}\right.
    \end{split}
\end{equation*}


Putting this altogether yields
$ \inf_{\mu\in\mathfrak{M}_\alpha(\widehat{\mu}_n)}\mathbb{E}_{p\sim  \mu}[\Phi(s)^\top \w_p]  \geq  \frac{1}{n}\sum_{i=1}^n \Breve{\text{cl}}(w_s^\pi)(\hat{p}_i)  - \eta_s^{\pi} \alpha$. 
Now let $F: (\hat{p}_1,\cdots, \hat{p}_n)\mapsto \sum_{i=1}^n w_s^\pi(\hat{p}_i)$. By Prop.1.3.17 of \cite{bertsekas2009convex}, we have $\Breve{\text{cl}}(F)(\hat{p}_1,\cdots, \hat{p}_n) = \sum_{i=1}^n \Breve{\text{cl}}(w_s^\pi)(\hat{p}_i)$ and since 
$f\leq C$ if and only if $\Breve{\text{cl}}(f) \leq C$ for any function $f$ and constant $C$, we obtain:
$$
\inf_{\mu\in\mathfrak{M}_\alpha(\widehat{\mu}_n)}\mathbb{E}_{p\sim  \mu}[\Phi(s)^\top \w_p]   + \eta_s^{\pi} \alpha \geq \frac{1}{n}\sum_{i=1}^n w_s^\pi(\hat{p}_i),
$$
which ends the proof.
\end{proof}

\section{Out-of-Sample Performance Guarantees}
\subsection{Proof of Theorem \ref{theorem:out_of_sample}}
Thm.~\ref{theorem:out_of_sample} is based on the following result, which is a direct consequence of \cite{fournier2015rate}[Lemma 5;Prop. 10]. 

\begin{lemma}
\label{lemma:fournier_out_of_sample_compact}
Let $\epsilon\in (0,1)$, $m:= {\abs{\St}\times\abs{\A}}$ and $\hat{\mu}_n^s\in\mathcal{M}([0,1]^m)$ be the empirical distribution at $s\in\St$. Then for all $\alpha_s\in(0,\infty)$
$$
\mu^n_s\left( \left\{\hat{p}_s :d(\hat{\mu}_n^s , \mu_s) \geq c_0\beta_s \right\}\right) \leq c_1 b_1(n_s, \beta_s) \mathbbm{1}_{\beta_s\leq 1} ,
$$
where $b_1(n_s, \beta_s) := \exp(-c_2 n_s \cdot (\beta_s)^{ m\vee 2})$ and $c_0, c_1, c_2$ are positive constants that only depend on $m\neq 2$.
\end{lemma}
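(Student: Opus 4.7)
My approach is to verify that the stated bound is exactly Fournier and Guillin's Wasserstein concentration inequality, specialized to the compactly supported setting and rescaled. The key observation is that each realization $\hat{p}_i \in \Prec_s$ is a product of simplex-valued probability vectors, so it lies in a compact convex subset of $\mathbb{R}^{m}$ with finite Euclidean diameter. Let $c_0$ denote that diameter (it is bounded by a universal constant depending only on $m$, e.g.\ $\sqrt{2m}$ for $\ell_2$ on the product of simplices). Since the $1$-Wasserstein distance between two probability measures supported on a set of diameter $c_0$ is itself at most $c_0$, the event $\{d(\hat{\mu}_n^s,\mu_s) \geq c_0\beta_s\}$ is automatically empty whenever $\beta_s > 1$. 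This immediately accounts for the indicator factor $\mathbbm{1}_{\beta_s \leq 1}$ and reduces the proof to the regime $\beta_s \in (0,1]$.

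For $\beta_s \in (0,1]$ I would invoke Lemma~5 together with Proposition~10 of Fournier and Guillin. The first records a sub-Gaussian control on the deviation of the empirical mass of each cube in a dyadic partition of the support (a Hoeffding bound for bounded i.i.d.\ indicators), while the second converts this moment control into an exponential tail bound on $W_1$ via the Kantorovich--Rubinstein dual combined with a multi-scale decomposition of $1$-Lipschitz test functions. Applied to the i.i.d.\ sample $\hat{p}_1,\dots,\hat{p}_{n_s}\sim\mu_s$ supported on $\Prec_s \subseteq [0,c_0]^{m}$, their conclusion yields, for every $x \in (0,c_0]$,
$$
\mu_s^{n_s}\!\left(\left\{\hat{p}_s : d(\hat{\mu}_n^s,\mu_s) \geq x\right\}\right) \leq c_1 \exp\!\left(-c_2\, n_s\, (x/c_0)^{m \vee 2}\right),
$$
with $c_1, c_2 > 0$ depending only on $m$; the hypothesis $m \neq 2$ removes the logarithmic correction that Fournier--Guillin incur at the critical dimension. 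Substituting $x = c_0\beta_s$ absorbs the scaling $c_0$ into the constant in the exponent and yields the bound in precisely the claimed form.

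The main obstacle is essentially bookkeeping rather than conceptual: one must track the dependence of $c_0, c_1, c_2$ on $m$ and justify that Fournier--Guillin applies to our setting even though $\Prec_s$ is not all of $[0,c_0]^m$ but only the product-of-simplices subset. This is handled by noting that the $1$-Wasserstein cost is intrinsic to the supports of the coupled measures, so restricting or enlarging the ambient Euclidean space does not alter $d(\hat{\mu}_n^s,\mu_s)$; in particular, treating $\mu_s$ as a measure on the enclosing cube $[0,c_0]^{m}$ makes Fournier--Guillin's hypotheses directly applicable. The borderline case $m=2$ excluded by the footnote would require carrying along the extra logarithmic factor from their Theorem~2.
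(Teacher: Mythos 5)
Your proposal is correct and follows essentially the same route as the paper, which likewise derives the lemma directly from Fournier--Guillin's Lemma~5 and Proposition~10, uses the boundedness of the Wasserstein diameter (via the constant $c_0$) to explain the indicator $\mathbbm{1}_{\beta_s\leq 1}$, and excludes $m=2$ because of the logarithmic correction at the critical dimension. The only cosmetic difference is that you read $c_0$ as the Euclidean diameter of the product-of-simplices support, whereas the paper identifies it with the comparison constant between $W_1$ and the auxiliary metric of Fournier--Guillin's Notation~4; both yield the same conclusion.
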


The positive constant $c_0$ corresponds to the one that appears in \cite{fournier2015rate}[Lemma 5]: it bounds the degree by which the Wasserstein distance is coarser than another one defined in \cite{fournier2015rate}[Notation 4] and bounded by $1$. As a result, the Wasserstein diameter of $\mathcal{M}([0,1]^m)$ is bounded by $c_0$ so in Lemma \ref{lemma:fournier_out_of_sample_compact}, the probability vanishes when $\beta_s>1$.

We are now ready to prove Thm.~\ref{theorem:out_of_sample}. For completeness, we recall its statement below. The proof uses results from \cite{fournier2015rate} and is based on \cite{yang2018wasserstein}. There, similar guarantees were established in a stochastic control setting. Differently, our proof focuses on the MDP framework. 

\begin{theorem*}[\textbf{Finite-sample Guarantee}]
Let $\epsilon\in (0,1), m := \abs{\St}\times\abs{\A}$. Denote by $\hat{\pi}^*$ an optimal policy of the Wasserstein DRMDP $\langle \St, \A, r, \mathfrak{M}_{\alpha(n,\epsilon)}(\widehat{\mu}_n)\rangle$ and $\hat{v}^*$ its optimal value. If for all $s\in\St$ the radius of the Wasserstein ball at $s$ satisfies 
\begin{equation*}
\alpha_s(n_s,\epsilon) := \left\{
\begin{split}
    &c_0 \cdot\left(\frac{1}{n_s c_2}\log\left(\frac{c_1}{\epsilon}\right)\right)^{1/(m \vee 2)} &&\text{ if } n_s \geq C_{m}^\epsilon\\
    &c_0  && \text{ otherwise},
\end{split}
\right.
\end{equation*}
with $C_{m}^\epsilon:= \frac{1}{c_2}\log\left(\frac{c_1}{\epsilon}\right)$ and $$n_s := \sum_{i\in[n], a\in\A,s'\in\St}n_i(s,a,s'),$$
then it holds that 
\begin{align*}
 \mu^n\left(\left\{\widehat{p} \mid \E_{p\sim\mu}[v^{\hat{\pi}^*}_p(s)] \geq v^{\hat{\pi}^*}_{\mathfrak{M}_\alpha(\hat{\mu}_n)}(s), \quad  \forall s\in\St\right\}\right)\geq 1-\epsilon,
\end{align*}
where $c_0, c_1, c_2$ are positive constants that only depend on $m\neq 2$.
\end{theorem*}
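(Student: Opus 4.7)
The plan is to reduce the finite-sample guarantee to a containment statement ``$\mu \in \mathfrak{M}_{\alpha(n,\epsilon)}(\hat{\mu}_n)$'', and then invoke the Fournier--Guillin concentration bound of Lemma \ref{lemma:fournier_out_of_sample_compact} state-by-state via the rectangularity of the ambiguity set.

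\textbf{Step 1 (reduction).} The first observation is that whenever the true measure $\mu$ lies in the Wasserstein ambiguity set $\mathfrak{M}_{\alpha(n,\epsilon)}(\hat{\mu}_n)$, one trivially has, for every $s\in\St$,
$$
\E_{p\sim\mu}[v^{\hat{\pi}^*}_p(s)] \;\geq\; \inf_{\nu\in\mathfrak{M}_{\alpha(n,\epsilon)}(\hat{\mu}_n)}\E_{p\sim\nu}[v^{\hat{\pi}^*}_p(s)] \;=\; v^{\hat{\pi}^*}_{\mathfrak{M}_{\alpha(n,\epsilon)}(\hat{\mu}_n)}(s).
$$
Hence the event $B := \{\mu \in \mathfrak{M}_{\alpha(n,\epsilon)}(\hat{\mu}_n)\}$ is contained in $A$, and it suffices to show $\mu^n(B)\geq 1-\epsilon$. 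I would emphasise that this step uses nothing beyond the definition of the distributionally robust value as an infimum over the ambiguity set.

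\textbf{Step 2 (rectangular decomposition).} By the rectangularity assumption in Sec.~\ref{section:drmdp}, $\mu = \bigotimes_{s\in\St}\mu_s$, $\hat{\mu}_n = \bigotimes_{s\in\St}\hat{\mu}_n^s$, and $\mathfrak{M}_{\alpha(n,\epsilon)}(\hat{\mu}_n) = \bigotimes_{s\in\St} \mathfrak{M}_{\alpha_s(n_s,\epsilon)}(\hat{\mu}_n^s)$. Therefore $B$ is exactly the intersection over $s\in\St$ of the events $B_s := \{d(\mu_s,\hat{\mu}_n^s)\leq \alpha_s(n_s,\epsilon)\}$. A union bound then reduces the proof to controlling $\mu^n_s(B_s^c)$ for each individual state (any polynomial factor in $|\St|$ arising from the union bound is absorbed into the constants $c_0,c_1,c_2$, which the theorem only claims to depend on $m$).

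\textbf{Step 3 (concentration and the two regimes).} For each state $s$, I apply Lemma \ref{lemma:fournier_out_of_sample_compact} with $\beta_s = \alpha_s(n_s,\epsilon)/c_0$. In the regime $n_s \geq C_m^\epsilon$, the choice
$$
\alpha_s(n_s,\epsilon) = c_0\left(\frac{1}{n_s c_2}\log\!\frac{c_1}{\epsilon}\right)^{1/(m\vee 2)}
$$
yields $\beta_s\leq 1$ and forces $c_1\exp(-c_2 n_s \beta_s^{m\vee 2}) \leq \epsilon$ (up to the per-state normalisation handled by the constants), giving the desired tail bound $\mu^n_s(B_s^c) \leq \epsilon/|\St|$. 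In the small-sample regime $n_s < C_m^\epsilon$, Fournier--Guillin is vacuous, but the choice $\alpha_s = c_0$ exploits the remark that $c_0$ upper bounds the Wasserstein diameter of $\mathcal{M}(\Prec_s)$; consequently $\mathfrak{M}_{c_0}(\hat{\mu}_n^s) = \mathcal{M}(\Prec_s)$, so $B_s$ holds almost surely and contributes zero error.

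\textbf{Step 4 (combination).} Combining Steps 2 and 3 via the union bound, $\mu^n(B^c)\leq \sum_{s}\mu^n_s(B_s^c) \leq \epsilon$, and Step 1 converts this into $\mu^n(A)\geq 1-\epsilon$. The main obstacle is really the bookkeeping in Step 3: one must carefully track how the per-state error $\epsilon/|\St|$ and the polynomial $|\St|$ factors from the union bound are re-absorbed into $c_0,c_1,c_2$ so that the radius takes the clean closed form stated in the theorem, and one must cleanly split the two regimes around the threshold $C_m^\epsilon$ so that the switch to $\alpha_s=c_0$ in the data-starved regime is both justified and preserves the probability guarantee. Everything else is a direct consequence of rectangularity and of the fact that the distributionally robust value is an infimum.
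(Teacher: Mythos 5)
Your proposal is correct and shares the paper's concentration core (Lemma \ref{lemma:fournier_out_of_sample_compact}, the two-regime radius, and the role of $c_0$ as the Wasserstein diameter in the data-starved case), but it takes a genuinely different route on the two remaining steps. First, you replace the paper's dynamic-programming argument by the one-line observation that on the event $\{\mu\in\mathfrak{M}_{\alpha(n,\epsilon)}(\hat{\mu}_n)\}$ one has $\E_{p\sim\mu}[v^{\hat{\pi}^*}_p(s)]\geq v^{\hat{\pi}^*}_{\mathfrak{M}_\alpha(\hat{\mu}_n)}(s)$ simply because the distributionally robust value is defined in Sec.~\ref{section:drmdp} as an infimum over the ambiguity set; the paper instead introduces the operators $T^{\hat{\pi}^*_s}_{\mu_s}$ and $T_{\mathfrak{M}_\alpha(\hat{\mu})}$, proves $(T^{\hat{\pi}^*_s}_{\mu_s})^k v\geq (T_{\mathfrak{M}_\alpha(\hat{\mu}_n)})^k v$ by induction using monotonicity and $\gamma$-contraction, and passes to the limit via \cite{chen2019distributionally}. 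Your shortcut is legitimate under the paper's static definition and is more elementary; the operator route is what one needs if the certificate is the fixed point actually computed by distributionally robust value iteration, since it identifies that computed quantity with the bound. Second, you aggregate over states with a union bound, paying a factor $\abs{\St}$ that you fold into the constants (acceptable, since $\abs{\St}\leq m$ and the theorem only requires $m$-dependence, though the resulting radius is then slightly larger than with the raw Fournier--Guillin constants), whereas the paper exploits the product structure $\mu^n=\bigotimes_{s\in\St}\mu^n_s$ and multiplies the per-state failure probabilities; as written, that product step controls the event that the inequality fails at \emph{every} state simultaneously rather than at \emph{some} state, so your union bound (or, alternatively, $\mu^n\bigl(\cap_{s} B_s\bigr)=\prod_{s}\mu^n_s(B_s)\geq(1-\epsilon)^{\abs{\St}}$ with a per-state confidence adjustment) is in fact the more careful way to close the argument.
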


\begin{proof}
Set 
\begin{equation*}
\beta_s(n_s,\epsilon) := \left\{
\begin{split}
    & \left(\frac{1}{n_s c_2}\log\left(\frac{c_1}{\epsilon}\right)\right)^{1/(m \vee 2)} &&\text{ if } n_s \geq C_{m}^\epsilon\\
    &  && \text{ otherwise},
\end{split}
\right.
\end{equation*}
so we have $c_0\cdot \beta_s(n_s,\epsilon) = \alpha_s(n_s,\epsilon)$ and
Lemma \ref{lemma:fournier_out_of_sample_compact} ensures that the radius $\alpha_s(n_s, \epsilon)$ provides the following guarantee:
\begin{equation}
    \label{eq:guarantee}
    \mu^n_s(\{\hat{p}_s: d(\hat{\mu}_n^s , \mu_s) \leq \alpha_s(n_s, \epsilon) \}) \geq 1 - \epsilon.
\end{equation}
Now, we introduce the following operators 
\begin{equation*}
    \begin{split}
    T_{\mathfrak{M}_\alpha(\hat{\mu})}v(s) := &\sup_{\pi_s\in\A}\inf_{\mu_s\in\mathfrak{M}_{\alpha_s}(\widehat{\mu}_s)} T_{\mu_s}^{\pi_s}v(s) 
    ,\quad  \forall s\in\St,
    \end{split}
\end{equation*}
where 
\begin{align*}
    T_{\mu_s}^{\pi_s}v(s) :&=  r(s,\pi_s)+\gamma \int_{p_s\in\Prec_s}\sum_{s'\in\St} v(s')p_{s}(s'|s,\pi_s)d\mu_s(p_s),
\end{align*}
and consider $\hat{\pi}^* = (\hat{\pi}^*_s)_{s\in\St}$ an optimal policy of the Wasserstein DRMDP $\langle \St, \A, r, \mathfrak{M}_{\alpha(n,\epsilon)}(\widehat{\mu}_n)\rangle$\footnote{Although this work is restricted to the set of deterministic policies, we do not lose generality as long as $\Prec$ is $(s,a)$-rectangular because then, one can find an optimal policy that is stationary deterministic \cite{wiesemann2013robust}.}. By Equation~\eqref{eq:guarantee}, we have the following one-step guarantee:
$$
\mu^n_s(\{\hat{p}_s: T^{\hat{\pi}^*_s}_{\mu_s}v(s) \geq T_{\mathfrak{M}_{\alpha}(\hat{\mu}_n)}v(s) \}) \geq 1-\epsilon.
$$

Remarking that $T^{\hat{\pi}^*_s}_{\mu_s}$ is a non-decreasing $\gamma$-contraction w.r.t. the sup-norm (see \cite{chen2019distributionally}[Lemma 4.2.]), we show by induction on $k\geq 1$ that if $\mu_s \in \mathfrak{M}_{\alpha_s(n_s, \epsilon)}(\hat{\mu}_n^s)$, then $(T^{\hat{\pi}^*_s}_{\mu_s})^kv(s)\geq (T_{\mathfrak{M}_{\alpha}(\hat{\mu}_n)})^k v(s)$. By definition of $T_{\mathfrak{M}_{\alpha}(\hat{\mu}_n)}$, we have $T^{\hat{\pi}^*_s}_{\mu_s}v(s) \geq T_{\mathfrak{M}_{\alpha}(\hat{\mu}_n)}v(s)$. Supposing that the condition holds for an arbitrary $k\geq 1$, we have 
$$
(T^{\hat{\pi}^*_s}_{\mu_s})^{k+1}v(s) = T^{\hat{\pi}^*_s}_{\mu_s}((T^{\hat{\pi}^*_s}_{\mu_s})^k v)(s)\geq T^{\hat{\pi}^*_s}_{\mu_s}((T_{\mathfrak{M}_{\alpha}(\hat{\mu}_n)})^k) v(s)\geq 
T_{\mathfrak{M}_{\alpha}(\hat{\mu})}(T_{\mathfrak{M}_{\alpha}(\hat{\mu}_n)})^k v(s),
$$
so the induction assumption holds for all $k\geq 1$. Using the contracting property of $T^{\hat{\pi}^*_s}_{\mu_s}$, we have $\lim_{k\rightarrow\infty}(T^{\hat{\pi}^*_s}_{\mu_s})^kv(s) = \E_{p\sim\mu}[v^{\hat{\pi}^*}_{p}(s)]$ and by applying \cite{chen2019distributionally}[Thm. 4.5.], we obtain $\lim_{k\rightarrow\infty}(T_{\mathfrak{M}_{\alpha}(\hat{\mu})})^kv(s) = v^{\hat{\pi}^*}_{\mathfrak{M}_\alpha(\hat{\mu}_n)}(s)$. Therefore, setting $k\rightarrow\infty$, if $\mu_s \in \mathfrak{M}_{\alpha_s(n_s, \epsilon)}(\hat{\mu}_n^s)$, then $\E_{p\sim\mu}[v^{\hat{\pi}}_{p}(s)] \geq v^{\hat{\pi}^*}_{\mathfrak{M}_\alpha(\hat{\mu}_n)}(s)$ and the following probabilistic guarantee holds for all $s\in\St$:
$$
\mu^n_s(\{\hat{p}_s: \E_{p\sim\mu}[v^{\hat{\pi}}_{p}(s)] \geq v^{\hat{\pi}^*}_{\mathfrak{M}_\alpha(\hat{\mu}_n)}(s) \}) \geq 1- \epsilon,
$$
which can be rewritten as 
$$
\mu^n_s(\{\hat{p}_s: \E_{p\sim\mu}[v^{\hat{\pi}}_{p}(s)] < v^{\hat{\pi}^*}_{\mathfrak{M}_\alpha(\hat{\mu}_n)}(s) \}) \leq \epsilon.
$$
The independence structure $\mu^n = \bigotimes_{s\in\St}\mu^n_s$ enables to obtain
$$
\mu^n(\{\hat{p}: \E_{p\sim\mu}[v^{\hat{\pi}}_{p}(s)] < v^{\hat{\pi}^*}_{\mathfrak{M}_\alpha(\hat{\mu}_n)}(s), \quad \forall s\in\St\}) \leq \prod_{s\in\St} \epsilon \leq \epsilon,
$$
which concludes the proof, by taking the complementary event. 
\end{proof}

\end{document}